\documentclass[a4paper,10pt,reqno]{amsart}
\usepackage[utf8x]{inputenc}

\usepackage{amsmath,amssymb,amsthm,amsxtra,mathrsfs,bm,bbm}
\usepackage{graphicx}
\usepackage[longnamesfirst,authoryear]{natbib}
\usepackage{url}
\usepackage[colorlinks,citecolor=blue]{hyperref}

\newcommand{\ee}{\mathrm{e}}
\newcommand{\ii}{\mathrm{i}}


\newcommand{\R}{\mathbb{R}}

\newcommand{\uu}{\boldsymbol{u}}
\newcommand{\vv}{\boldsymbol{v}}
\newcommand{\ww}{\boldsymbol{w}}

\newcommand{\BB}{\boldsymbol{B}}

\newcommand{\Ss}{\mathcal{S}}

\newcommand{\eps}{\varepsilon}

\newcommand{\weakstarto}{\overset{*}{\rightharpoonup}}

\newcommand{\pd}{\partial}
\newcommand{\rd}{\mathrm{d}}

\newcommand{\Grad}{\nabla}
\newcommand{\Div}{\nabla \cdot}
\newcommand{\Curl}{\nabla \times}
\newcommand{\Laplace}{\Delta}

\newcommand{\abs}[1]{\left| #1 \right|}
\newcommand{\norm}[1]{\| #1 \|}
\newcommand{\bignorm}[1]{\left\| #1 \right\|}

\newcommand{\normBig}[1]{\Big\| #1 \Big\|}

\newcommand{\inner}[2]{\langle #1 , #2 \rangle}

\theoremstyle{plain}
\newtheorem{thm}{Theorem}[section]
\newtheorem{prop}[thm]{Proposition}
\newtheorem{lemma}[thm]{Lemma}

\newtheorem{cor}[thm]{Corollary}

\numberwithin{equation}{section}


\makeatletter
\newtheorem*{rep@theorem}{\rep@title}
\newcommand{\newreptheorem}[2]{\newenvironment{rep#1}[1]{\def\rep@title{#2 \ref{##1}}\begin{rep@theorem}}{\end{rep@theorem}}}
\makeatother

\newreptheorem{thm}{Theorem}

\title[Commutator estimates and local existence for non-resistive MHD]{Higher order commutator estimates and local existence for the non-resistive MHD equations and related models}

\author[C.\ L.\ Fefferman]{Charles L.\ Fefferman}
\thanks{CLF is supported by NSF grant DMS-09-0104. DSMcC is a member of the Warwick ``MASDOC'' doctoral training centre, which is funded by EPSRC grant EP/HO23364/1. JCR is supported by an EPSRC Leadership Fellowship EP/G007470/1.}
\address{C.\ L.\ Fefferman \\
Department of Mathematics \\
Princeton University \\
Fine Hall \\
Washington Road \\
Princeton, NJ 08544}
\email{cf@math.princeton.edu}

\author[D.\ S.\ McCormick]{David S.\ McCormick}
\address{D.\ S.\ McCormick \\
Mathematics Institute \\
University of Warwick \\
Coventry, CV4 7AL \\
United Kingdom}
\email{d.s.mccormick@warwick.ac.uk}

\author[J.\ C.\ Robinson]{James C.\ Robinson}
\address{J.\ C.\ Robinson \\
Mathematics Institute \\
University of Warwick \\
Coventry, CV4 7AL \\
United Kingdom}
\email{j.c.robinson@warwick.ac.uk}

\author[J.\ L.\ Rodrigo]{Jose L.\ Rodrigo}
\address{J.\ L.\ Rodrigo \\
Mathematics Institute \\
University of Warwick \\
Coventry, CV4 7AL \\
United Kingdom}
\email{j.l.rodrigo@warwick.ac.uk} 

\date{January 20, 2014}

\keywords{Commutator estimates, magnetohydrodynamics, MHD.}

\subjclass[2010]{
	Primary: 35Q35, 42B37, 76W05. Secondary: 35K51, 35M33.
}

\makeatletter
\hypersetup{
	pdftitle = {\@title},
	pdfauthor = {\shortauthors},
	pdfsubject = {2010 MSC: \@subjclass},
	pdfkeywords = {\@keywords},
}
\makeatother

\begin{document}

\maketitle

\begin{abstract}
This paper establishes the local-in-time existence and uniqueness of strong solutions in $H^{s}$ for $s > n/2$ to the viscous, non-resistive magnetohydrodynamics (MHD) equations in $\R^{n}$, $n=2, 3$, as well as for a related model where the advection terms are removed from the velocity equation. The uniform bounds required for proving existence are established by means of a new estimate, which is a partial generalisation of the commutator estimate of Kato \& Ponce (Comm. Pure Appl. Math. \textbf{41}(7), 891--907, 1988).
\end{abstract}

\section{Introduction}

In this paper we prove local-in-time existence of strong solutions to the non-resistive magnetohydrodynamics (MHD) equations:
\begin{subequations}
\label{eqn:MHD}
\begin{align}
\frac{\pd \uu}{\pd t} + (\uu \cdot \Grad) \uu - \nu \Laplace \uu + \Grad p_{*} &= (\BB \cdot \Grad) \BB, \label{eqn:MHD-u} \\
\frac{\pd \BB}{\pd t} + (\uu \cdot \Grad) \BB &= (\BB \cdot \Grad) \uu, \label{eqn:MHD-B} \\
\Div \uu = \Div \BB &= 0 \label{eqn:MHD-Div}
\end{align}
\end{subequations}
on the whole of $\R^{n}$ with $n=2, 3$, with divergence-free initial data $\uu_{0}, \BB_{0} \in H^{s}(\R^{n})$, for $s > n/2$. In particular, we prove the following theorem.

\begin{thm}
\label{thm:MHDLocalExistence}
For $s > n/2$, and initial data $\uu_{0}, \BB_{0} \in H^{s}(\R^{n})$ with $\Div \uu_{0} = \Div \BB_{0} = 0$, there exists a time $T_{*} = T_{*}(s, \norm{\uu_{0}}_{H^{s}}, \norm{\BB_{0}}_{H^{s}}) > 0$ such that the equations \eqref{eqn:MHD} have a unique solution $(\uu, \BB)$, with $\uu, \BB \in C([0, T_{*}]; H^{s}(\R^{n}))$.
\end{thm}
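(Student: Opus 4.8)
The plan is to prove Theorem~\ref{thm:MHDLocalExistence} by the energy method, combined with a regularisation that preserves the divergence-free and transport structure of \eqref{eqn:MHD}. First I would construct approximate solutions $(\uu^{\eps}, \BB^{\eps})$, for instance by mollifying the nonlinear terms (replacing $(\uu \cdot \Grad)\uu$ by $J_{\eps}[(J_{\eps}\uu \cdot \Grad) J_{\eps}\uu]$, and so on) so that the regularised system becomes an ODE in a closed subspace of $H^{s}$, solvable by Picard iteration. The mollification must be arranged so that $\Div \uu^{\eps} = \Div \BB^{\eps} = 0$ for all time and so that the cancellation described below survives; one could equally add an artificial resistivity $+\kappa \Laplace \BB$, whose contribution $-\kappa \norm{\BB}_{H^{s+1}}^{2}$ only improves the estimates and is discarded in the limit.

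The heart of the argument is a uniform-in-$\eps$ a priori bound in $H^{s}$, and this is where I expect the real difficulty to lie, precisely because the magnetic field enjoys no dissipation. Applying $\rD^{s} = (I - \Laplace)^{s/2}$ to each equation, pairing with $\rD^{s}\uu$ and $\rD^{s}\BB$ in $L^{2}$ and adding, the pressure drops out and one obtains
\begin{equation*}
\tfrac{1}{2} \frac{\rd}{\rd t} \big( \norm{\uu}_{H^{s}}^{2} + \norm{\BB}_{H^{s}}^{2} \big) + \nu \norm{\uu}_{H^{s+1}}^{2} = \mathcal{N},
\end{equation*}
where $\mathcal{N}$ collects the transport and magnetic contributions. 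The decisive structural point is that, since $\Div \BB = 0$, the top-order parts of the magnetic tension term $(\BB \cdot \Grad)\BB$ in \eqref{eqn:MHD-u} and of the stretching term $(\BB \cdot \Grad)\uu$ in \eqref{eqn:MHD-B} cancel exactly:
\begin{equation*}
\inner{(\BB \cdot \Grad) \rD^{s}\BB}{\rD^{s}\uu} + \inner{(\BB \cdot \Grad) \rD^{s}\uu}{\rD^{s}\BB} = 0.
\end{equation*}
This cancellation is essential: the surviving quantity $\inner{(\BB \cdot \Grad) \rD^{s}\BB}{\rD^{s}\uu}$ on its own would carry $s+1$ derivatives of $\BB$, for which there is no smoothing.

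What remains are genuine commutators, and controlling them is exactly the role of the higher-order commutator estimate. The two purely magnetic commutators reduce to bounding $[\rD^{s}, \BB_{j}]\pd_{j}\BB$; moving one derivative across, $\inner{[\rD^{s}, \BB_{j}]\pd_{j}\BB}{\rD^{s}\uu} = \inner{\rD^{-1}[\rD^{s}, \BB_{j}]\pd_{j}\BB}{\rD^{s+1}\uu}$, one spends the single viscous derivative on $\uu$ and invokes the commutator estimate in the form $\norm{[\rD^{s}, \BB_{j}]\pd_{j}\BB}_{H^{-1}} \lesssim \norm{\BB}_{H^{s}}^{2}$, valid for $s > n/2$ with no appeal to $\norm{\Grad \BB}_{L^{\infty}}$. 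The transport term $(\uu \cdot \Grad)\BB$ and the mixed commutator $[\rD^{s}, \BB_{j}]\pd_{j}\uu$ are handled similarly, each estimate spending one factor of $\norm{\uu}_{H^{s+1}}$ against quantities controlled by $\norm{\uu}_{H^{s}}$ and $\norm{\BB}_{H^{s}}$; here $\Grad \uu \in H^{s} \hookrightarrow L^{\infty}$ and $\Grad \BB \in H^{s-1} \hookrightarrow L^{n}$ (both for $s > n/2$) are the embeddings that close the endpoint terms, and the Navier--Stokes term is classical via $\norm{(\uu \cdot \Grad)\uu}_{H^{s-1}} \lesssim \norm{\uu}_{H^{s}}^{2}$ paired with $\rD^{s+1}\uu$. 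After Young's inequality absorbs every factor of $\norm{\uu}_{H^{s+1}}$ into the viscous term, writing $y = \norm{\uu}_{H^{s}}^{2} + \norm{\BB}_{H^{s}}^{2}$ yields a Riccati inequality $y' \le C y^{2}$, whose integration gives a bound on $[0, T_{*}]$ with $T_{*} = T_{*}(s, \norm{\uu_{0}}_{H^{s}}, \norm{\BB_{0}}_{H^{s}})$, uniform in $\eps$.

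With the uniform bounds in hand the remaining steps are comparatively standard. From the equations $\pd_{t}\uu^{\eps}$ and $\pd_{t}\BB^{\eps}$ are bounded in a weaker norm, so the Aubin--Lions--Simon lemma yields a subsequence converging strongly in $C([0, T_{*}]; H^{s'})$ for $s' < s$; this suffices to pass to the limit in every nonlinear term and to exhibit a solution with $\uu, \BB \in L^{\infty}(0, T_{*}; H^{s})$ and $\uu \in L^{2}(0, T_{*}; H^{s+1})$. Upgrading $L^{\infty}(0, T_{*}; H^{s})$ to $C([0, T_{*}]; H^{s})$ follows from weak continuity together with continuity of the $H^{s}$ norm, using the transport structure of the $\BB$-equation and the parabolic regularity of $\uu$. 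Finally, uniqueness comes from an $L^{2}$ estimate on the difference $(\ww, \boldsymbol{b}) = (\uu_{1} - \uu_{2}, \BB_{1} - \BB_{2})$: the same $\Div \BB_{1} = 0$ cancellation removes the dangerous magnetic cross-terms, the viscous term $\nu \norm{\Grad \ww}_{L^{2}}^{2}$ absorbs the contributions involving $\Grad \BB_{2} \in L^{n}$ through $\ww \in \dot{H}^{1} \hookrightarrow L^{2^{*}}$, and a Grönwall argument with the integrable-in-time factor $\norm{\uu_{2}}_{H^{s+1}}^{2}$ forces $\ww = \boldsymbol{b} = 0$.
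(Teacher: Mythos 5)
Your overall strategy (regularise, uniform $H^{s}$ energy estimates via the commutator estimate, pass to the limit, upgrade continuity, uniqueness by an $L^{2}$ difference estimate) matches the paper's, but one step is a genuine gap as written: the compactness argument. On the whole space $\R^{n}$ the embedding $H^{s}(\R^{n}) \hookrightarrow H^{s'}(\R^{n})$, $s' < s$, is \emph{not} compact, so the Aubin--Lions--Simon lemma does not deliver a subsequence converging strongly in $C([0,T_{*}]; H^{s'}(\R^{n}))$; it gives strong convergence only on compact subsets, after which one needs a further argument (uniform smallness of tails, as in \cite{book:CDGG}, \S2.2.4) to handle the nonlinear terms globally. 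The paper explicitly flags this pitfall and avoids it: instead of compactness, Proposition~\ref{prop:Cauchy} shows the approximations are Cauchy in $L^{\infty}(0,T_{*};L^{2}(\R^{n}))$ by estimating the difference of two truncated solutions, using the truncation bound \eqref{eqn:MollifierProp2} and a product estimate that is split into 2D ($H^{1-\eps} \times H^{\eps}$) and 3D ($L^{6} \times L^{3}$) cases; interpolation with the uniform $H^{s}$ bound then gives strong convergence in every $H^{s'}$, $s' < s$. You need either this Cauchy argument or the tail argument. The same 2D endpoint bites your uniqueness sketch: $\ww \in \dot{H}^{1} \hookrightarrow L^{2^{*}}$ fails for $n = 2$ (where $2^{*} = \infty$), which is precisely why the paper splits dimensions in its difference estimate.

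Your a priori estimate, by contrast, is correct but organised differently from the paper's, and the comparison is instructive. The paper does \emph{not} invoke the tension/stretching cancellation at top order: it bounds $\inner{\Lambda^{s}[(\BB\cdot\Grad)\uu]}{\Lambda^{s}\BB}$ directly by the algebra property of $H^{s}$, and handles $\inner{\Lambda^{s}[(\BB\cdot\Grad)\BB]}{\Lambda^{s}\uu}$ by writing $(\BB\cdot\Grad)\BB = \Div(\BB\otimes\BB)$ and integrating by parts onto $\uu$; the new commutator estimate (Theorem~\ref{thm:Commutator}, via Corollary~\ref{cor:Commutator}) is reserved for the single genuinely hard term $\inner{\Lambda^{s}[(\uu\cdot\Grad)\BB]}{\Lambda^{s}\BB}$. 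Your route also ultimately needs exactly the bound $\norm{[\Lambda^{s},\uu_{j}]\pd_{j}\BB}_{L^{2}} \lesssim \norm{\Grad\uu}_{H^{s}}\norm{\BB}_{H^{s}}$ there --- your vaguer appeal to embeddings ``closing endpoint terms'' should be replaced by an explicit citation of that estimate, since Kato--Ponce alone leaves the uncontrollable $\norm{\Grad\BB}_{L^{\infty}}$ factor. Your cancellation-plus-$H^{-1}$ treatment of the magnetic terms is valid, though it buys nothing: both pieces of $[\rD^{s},\BB_{j}]\pd_{j}\BB$ are \emph{separately} bounded in $H^{-1}$ by $\norm{\BB}_{H^{s}}^{2}$ (the first by the algebra property applied to $\rD^{s}\Div(\BB\otimes\BB)$, the second by writing $\BB_{j}\pd_{j}\rD^{s}\BB = \Div(\BB\otimes\rD^{s}\BB)$ with $\BB \in L^{\infty}$), so the cancellation is unnecessary, if harmless. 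Finally, a technicality: the paper proves its commutator estimate for the homogeneous operator $\Lambda^{s}$ and recovers the full $H^{s}$ norm by adding the $L^{2}$ energy identity \eqref{eqn:EnergyEquality}; you work throughout with $\rD^{s} = J^{s}$, so you should either rerun the Fourier-side proof for the inhomogeneous symbol or switch to $\Lambda^{s}$ plus the $L^{2}$ estimate as the paper does.
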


Note that there is no diffusion term in \eqref{eqn:MHD-B}. When this term ($-\eta \Laplace \BB$) is also present, in 2D one has global existence and uniqueness of weak solutions, and in 3D one has local existence of weak solutions, much like the Navier--Stokes equations; these results go back to \cite{art:DuvautLions1972} and \cite{art:SermangeTemam1983}.

By contrast, for the system \eqref{eqn:MHD} with diffusion only in \eqref{eqn:MHD-u}, \cite{art:JiuNiu2006} established local existence of solutions in 2D for initial data in $H^{s}$, but only for integer $s \geq 3$. They also proved a conditional regularity result in 2D: the solution to \eqref{eqn:MHD} can be extended beyond time $T$ if $\BB \in L^{p}(0, T; W^{2,q}(\R^{2}))$, for $\frac{2}{p} + \frac{1}{q} \leq 2$, and $1 \leq p \leq \frac{4}{3}$, $2 < q \leq \infty$. This was generalised by \cite{art:ZhouFan2011}, who showed that $\Grad \BB \in L^{1}(0, T; \mathrm{BMO}(\R^{2}))$ suffices. In 3D, \cite{art:FanOzawa2009} established a similar conditional regularity result, showing that the solution can be extended beyond time $T$ if $\Grad \uu \in L^{1}(0, T; L^{\infty}(\R^{3}))$.

Intriguingly, with diffusion for $\BB$ but \emph{not} for $\uu$, \cite{art:Kozono1989} proved global existence of weak solutions in 2D for divergence-free initial data in $L^{2}$; while in 3D, \cite{art:FanOzawa2009} showed that, again, the solution can be extended beyond time $T$ if $\Grad \uu \in L^{1}(0, T; L^{\infty}(\R^{3}))$.

In the ideal case, with no diffusion in either equation, \cite{art:Schmidt1988} and \cite{art:Secchi1993} established local existence of strong solutions when the initial data is in $H^{s}$ for integer $s > 1 + n/2$, while \cite{art:CKS1997} proved a conditional regularity result for fully ideal MHD which corresponds to the conditional regularity result for Euler due to \cite{art:BKM1984}: namely, if
\[
\int_{0}^{T} \left( \norm{\Curl \uu(\tau)}_{\infty} + \norm{\Curl \BB(\tau)}_{\infty} \right) \, \rd \tau < \infty,
\]
then the solution can be continued beyond time $T$.

The system \eqref{eqn:MHD} is connected with the method of \emph{magnetic relaxation}, an idea discussed by \cite{art:Moffatt1985}. Formally, we obtain the standard energy estimate
\[
\frac{1}{2} \frac{\rd}{\rd t} \left( \norm{\uu}_{L^{2}}^{2} + \norm{\BB}_{L^{2}}^{2} \right) + \nu \norm{\Grad \uu}_{L^{2}}^{2} = 0;
\]
therefore, as long as $\uu$ is not identically zero, the energy should decay. Thus, the magnetic forces on a viscous non-resistive plasma should come to equilibrium, so that the fluid velocity $\uu$ tends to zero. We should be left with a steady magnetic field $\BB$ that satisfies $(\BB \cdot \Grad) \BB - \Grad p_{*} = 0$, which up to a change of sign for the pressure are the stationary Euler equations.

However, while this is a useful heuristic argument there is as yet no rigorous proof that the method should yield a stationary Euler flow, not least because there is no global existence result for the system \eqref{eqn:MHD}, even in 2D. Nonetheless, \cite{art:Nunez2007} proved that $\norm{\uu(t)}_{L^{2}} \to 0$ as $t \to \infty$, if we assume a smooth solution to \eqref{eqn:MHD} exists for all time, and that the solution satisfies $\norm{\BB(t)}_{L^{\infty}} \leq M$ for all $t$. We should note that \cite{art:EncisoPeraltaSalas2012} proved the existence of a stationary Euler flow, albeit with infinite energy, with stream or vortex lines of prescribed link type; but whether such flows arise as limits of system \eqref{eqn:MHD} is still very much open.

The main difficulty in proving local existence (Theorem~\ref{thm:MHDLocalExistence}) with diffusion only in the $\uu$ equation stems from the nonlinear terms. Naively, $H^{s}$ is an algebra for $s > n/2$, so one obtains
\[
\abs{\inner{(\uu \cdot \Grad)\vv}{\ww}_{H^{s}}} \leq \norm{\uu}_{H^{s}} \norm{\Grad \vv}_{H^{s}} \norm{\ww}_{H^{s}}.
\]
For three of the four nonlinear terms, this is sufficient, but for the $(\uu \cdot \Grad) \BB$ term we must estimate $\norm{\Grad \BB}_{H^{s}}$, and if we start with $\BB_{0} \in H^{s}$ we have no control over the $H^{s}$ norm of $\Grad \BB$ because there is no smoothing for $\BB$.

We will show that for $s > n/2$ one can in fact obtain the bound
\[
\abs{\inner{(\uu \cdot \Grad)\BB}{\BB}_{H^{s}}} \leq c \norm{\Grad \uu}_{H^{s}} \norm{\BB}_{H^{s}}^{2}.
\]
This is a consequence of a new commutator estimate applicable to the nonlinear terms. To describe this, let $J^{s}$ and $\Lambda^{s}$ denote fractional derivative operators defined in terms of Fourier transforms\footnote{Note that we use the definition $\mathscr{F}[f](\xi) = \hat{f}(\xi) = \int_{\R^{n}} \ee^{-2\pi \ii x \cdot \xi} f(x) \, \rd x$.} as follows:
\[
\mathscr{F}[ J^{s} f ] (\xi) = (1 + |\xi|^{2})^{s/2} \hat{f}(\xi), \qquad \mathscr{F}[ \Lambda^{s} f ] (\xi) = |\xi|^{s} \hat{f}(\xi).
\]
It was proved in \cite{art:KatoPonce1988} that, for $s \geq 0$ and $1 < p < \infty$, the nonlinear terms satisfy the following estimate:
\[
\norm{J^{s} [ (\uu \cdot \Grad) \BB ] - (\uu \cdot \Grad) (J^{s} \BB) }_{L^{p}} \leq c ( \norm{\Grad \uu}_{L^{\infty}} \norm{J^{s-1} \Grad \BB}_{L^{p}} + \norm{J^{s} \uu}_{L^{p}} \norm{\Grad \BB}_{L^{\infty}} )
\]
which, for $p = 2$ and $s > n/2$, implies the following:
\begin{equation}
\label{eqn:KatoPonce}
\norm{J^{s} [ (\uu \cdot \Grad) \BB ] - (\uu \cdot \Grad) (J^{s} \BB) }_{L^{2}} \leq c ( \norm{\Grad \uu}_{H^{s}} \norm{\BB}_{H^{s}} + \norm{\uu}_{H^{s}} \norm{\Grad \BB}_{H^{s}} ).
\end{equation}
Once again, however, estimate \eqref{eqn:KatoPonce} cannot immediately be applied to our system of equations, because the second term on the right-hand side still contains $\norm{\Grad \BB}_{H^{s}}$; we thus require, and now prove, a similar estimate that only contains the first of the two terms on the right-hand side of \eqref{eqn:KatoPonce}.

\begin{thm}
\label{thm:Commutator}
Given $s > n/2$, there is a constant $c = c(n,s)$ such that, for all $\uu, \BB$ with $\Grad \uu, \BB \in H^{s}(\R^{n})$,
\begin{equation}
\label{eqn:Commutator}
\norm{ \Lambda^{s} [(\uu \cdot \Grad) \BB] - (\uu \cdot \Grad) (\Lambda^{s}\BB) }_{L^{2}} \leq c \norm{\Grad \uu}_{H^{s}} \norm{\BB}_{H^{s}}.
\end{equation}
\end{thm}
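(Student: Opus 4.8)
The plan is to work entirely on the Fourier side. Writing $\widehat{g}$ for the Fourier transform and using $\mathscr{F}[\pd_{j}f](\xi) = 2\pi\ii\,\xi_{j}\widehat{f}(\xi)$, the commutator in \eqref{eqn:Commutator} has Fourier transform
\[
\mathscr{F}\big[\Lambda^{s}[(\uu\cdot\Grad)\BB] - (\uu\cdot\Grad)(\Lambda^{s}\BB)\big](\xi) = 2\pi\ii\int_{\R^{n}}\widehat{\uu}(\xi-\eta)\cdot\eta\,\big(|\xi|^{s} - |\eta|^{s}\big)\widehat{\BB}(\eta)\,\rd\eta,
\]
so by Plancherel it suffices to bound the $L^{2}_{\xi}$ norm of this integral. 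The entire gain over the naive product estimate comes from the factor $|\xi|^{s}-|\eta|^{s}$, which vanishes on the diagonal; the first step is to exploit this through the elementary mean value inequality
\[
\big||\xi|^{s} - |\eta|^{s}\big| \le s\,|\xi-\eta|\,\max(|\xi|,|\eta|)^{s-1},
\]
which is valid because $s > n/2 \ge 1$, and to bound $|\widehat{\uu}(\xi-\eta)\cdot\eta| \le |\widehat{\uu}(\xi-\eta)|\,|\eta|$ throughout.

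Next I would split the $\eta$-integral according to whether $|\eta| \ge |\xi-\eta|$ or $|\eta| < |\xi-\eta|$. On the first region one has $|\xi| \le 2|\eta|$, hence $\max(|\xi|,|\eta|)^{s-1} \le C|\eta|^{s-1}$, and the integrand is dominated by the convolution kernel $C\big(|\xi-\eta|\,|\widehat{\uu}(\xi-\eta)|\big)\big(|\eta|^{s}|\widehat{\BB}(\eta)|\big)$. Young's inequality then gives a bound by $C\,\bignorm{\,|\cdot|\,\widehat{\uu}\,}_{L^{1}}\norm{\Lambda^{s}\BB}_{L^{2}}$, and Cauchy--Schwarz controls $\bignorm{\,|\cdot|\,\widehat{\uu}\,}_{L^{1}}$ by $\norm{\langle\cdot\rangle^{-s}}_{L^{2}}\,\norm{\Grad\uu}_{H^{s}}$, where $\langle\cdot\rangle^{-s}\in L^{2}(\R^{n})$ precisely because $s > n/2$. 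This already yields the desired estimate on the first region.

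The main obstacle is the second region $|\eta| < |\xi-\eta|$, which is exactly the high-$\uu$-frequency regime responsible for the unwanted term $\norm{\uu}_{H^{s}}\norm{\Grad\BB}_{H^{s}}$ in \eqref{eqn:KatoPonce}: here $\max(|\xi|,|\eta|) \le 2|\xi-\eta|$, so the integrand is dominated by $C\big(|\xi-\eta|^{s}|\widehat{\uu}(\xi-\eta)|\big)\big(|\eta|\,|\widehat{\BB}(\eta)|\big)$, and a direct application of Young's inequality forces either $\norm{\Lambda^{s}\uu}_{L^{2}}\norm{\Grad\BB}_{L^{2}}$ (too many derivatives on $\BB$) or an $L^{1}$ norm of $|\cdot|^{s}\widehat{\uu}$ (too many on $\uu$). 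The key idea is to redistribute the single derivative using the constraint $|\eta| < |\xi-\eta|$: fixing an exponent $\theta$ with $0 < \theta < \min(1, s-n/2)$, I write $|\eta| = |\eta|^{\theta}|\eta|^{1-\theta} \le |\eta|^{\theta}|\xi-\eta|^{1-\theta}$, so the integrand is bounded by $\big(|\xi-\eta|^{s+1-\theta}|\widehat{\uu}(\xi-\eta)|\big)\big(|\eta|^{\theta}|\widehat{\BB}(\eta)|\big)$, and Young's inequality gives a bound by $C\,\bignorm{\,|\cdot|^{s+1-\theta}\widehat{\uu}\,}_{L^{2}}\,\bignorm{\,|\cdot|^{\theta}\widehat{\BB}\,}_{L^{1}}$. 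Since $s-\theta>0$ one checks the pointwise inequality $|\xi|^{s+1-\theta} \le C\langle\xi\rangle^{s}|\xi|$, so the first factor is controlled by $\norm{\Grad\uu}_{H^{s}}$; for the second, Cauchy--Schwarz gives $\bignorm{\,|\cdot|^{\theta}\widehat{\BB}\,}_{L^{1}} \le \bignorm{\,|\cdot|^{\theta}\langle\cdot\rangle^{-s}\,}_{L^{2}}\norm{\BB}_{H^{s}}$, and the weight $|\eta|^{\theta}\langle\eta\rangle^{-s}$ lies in $L^{2}(\R^{n})$ exactly when $2(s-\theta) > n$, i.e.\ $\theta < s - n/2$. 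This is where the threshold $s > n/2$ is genuinely used: it creates just enough spare integrability to absorb the extra derivative taken off the $\BB$ factor. Combining the two regions completes the proof, the only delicate point being the choice of $\theta$, which simultaneously keeps $\uu$ within $\Grad\uu\in H^{s}$ and $\BB$ within $H^{s}$.
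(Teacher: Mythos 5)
Your proposal is correct and follows essentially the same route as the paper: both pass to the Fourier side, split the frequency integral into a region where the $\uu$-frequency is dominated (handled via a mean-value bound on $|\xi|^{s}-|\eta|^{s}$ and Young's $L^{1}*L^{2}$ inequality) and a region where it dominates (handled via Young's $L^{2}*L^{1}$ inequality), with $s>n/2$ entering both times through Cauchy--Schwarz and the fact that $(1+|\cdot|^{2})^{-s/2}\in L^{2}(\R^{n})$. The only differences are cosmetic: the paper splits on $|\zeta| \lessgtr |\xi|/2$ rather than comparing the two input frequencies, and in the high-$\uu$-frequency region it moves the \emph{entire} derivative onto the $\uu$ factor --- i.e.\ your $\theta=0$, which is already admissible since $(1+|\eta|^{2})^{-s/2}\in L^{2}$ for $s>n/2$ --- so your positive $\theta$ is a harmless but unnecessary refinement (your mean-value inequality with $\max(|\xi|,|\eta|)^{s-1}$ does require $s\geq 1$, but that is automatic in the paper's setting $n\geq 2$).
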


The rest of the paper is structured as follows. In Section~\ref{sec:Commutator}, we prove Theorem~\ref{thm:Commutator}. In Section~\ref{sec:LocalExistenceMHD}, we use Theorem~\ref{thm:Commutator} and various other standard techniques to prove Theorem~\ref{thm:MHDLocalExistence}. In Section~\ref{sec:LocalExistenceCEP}, we outline a proof of local existence for a related model, namely equations~\eqref{eqn:MHD} with the $\frac{\pd \uu}{\pd t} + (\uu \cdot \Grad) \uu$ terms removed from the first equation, which we previously studied in \cite{art:ARMA}. Finally, in Appendix~\ref{app:Counterexample}, we exhibit a counterexample to show that our commutator estimate does not hold in the case $s = n/2$, at least for $n=2$, even if $\uu$ and $\BB$ are required to be divergence-free; this therefore suggests that proving local existence in $H^{n/2}$ (if possible) would require a more refined technique. We note that in a recent paper \cite{art:BourgainLi2013} showed that the Euler equations on $\R^{n}$ are in fact ill-posed in $H^{1+n/2}$ ($n=2,3$); in light of this it seems likely that system~\eqref{eqn:MHD} is ill-posed in $H^{n/2}$.

\section{Commutator estimates}
\label{sec:Commutator}

In this section, we prove the following commutator estimate.

\begin{repthm}{thm:Commutator}
Given $s > n/2$, there is a constant $c = c(n,s)$ such that, for all $\uu, \BB$ with $\Grad \uu, \BB \in H^{s}(\R^{n})$,
\begin{equation}
\norm{ \Lambda^{s} [(\uu \cdot \Grad) \BB] - (\uu \cdot \Grad) (\Lambda^{s}\BB) }_{L^{2}} \leq c \norm{\Grad \uu}_{H^{s}} \norm{\BB}_{H^{s}}. \tag{\ref{eqn:Commutator}}
\end{equation}
\end{repthm}

Before embarking on the proof, we note that a priori the left-hand side makes sense only when $\uu, \Grad \BB \in H^{s}(\R^{n})$; however, the right-hand side is finite when $\Grad \uu, \BB \in H^{s}(\R^{n})$, and since both sides are linear in $\uu$ and $\BB$ it suffices to prove the inequality for $\uu, \BB \in C_{c}^{\infty}(\R^{n})$ and use the density of $C_{c}^{\infty}(\R^{n})$ in $H^{s}(\R^{n})$.

\begin{proof}
Let $\uu, \BB \in C_{c}^{\infty}(\R^{n})$. First, note that
\[
\mathscr{F} [ (\uu \cdot \Grad) \BB_{k} ] (\xi) = \sum_{j=1}^{n} \widehat{(\uu_{j} \pd_{j} \BB_{k})} (\xi) = \sum_{j=1}^{n} \int \hat{\uu}_{j}(\zeta) (\xi - \zeta)_{j} \hat{\BB}_{k}(\xi - \zeta) \, \rd \zeta,
\]
so
\[
\mathscr{F} [ \Lambda^{s}[(\uu \cdot \Grad) \BB_{k}] ] (\xi) = |\xi|^{s} \sum_{j=1}^{n} \int \hat{\uu}_{j}(\zeta) (\xi - \zeta)_{j} \hat{\BB}_{k}(\xi - \zeta) \, \rd \zeta.
\]
Similarly,
\[
\mathscr{F} [ (\uu \cdot \Grad) (\Lambda^{s} \BB_{k}) ] (\xi) = \sum_{j=1}^{n} \int \hat{\uu}_{j}(\zeta) (\xi - \zeta)_{j} |\xi - \zeta|^{s} \hat{\BB}_{k}(\xi - \zeta) \, \rd \zeta.
\]
Therefore the Fourier transform of $\Lambda^{s} [(\uu \cdot \Grad) \BB] - (\uu \cdot \Grad) (\Lambda^{s}\BB)$ is
\[
\sum_{j=1}^{n} \int (|\xi|^{s} - |\xi - \zeta|^{s}) \hat{\uu}_{j}(\zeta) (\xi - \zeta)_{j} \hat{\BB}_{k}(\xi - \zeta) \, \rd \zeta;
\]
by Parseval's identity it suffices to bound this in $L^{2}$.

We split the integral into the two regions $|\zeta| < |\xi|/2$ and $|\zeta| \geq |\xi|/2$. In the first region $|\zeta| < |\xi|/2$, we use the inequality
\begin{equation}
\label{eqn:GradientEstimate}
\abs{ |\xi|^{s} - |\xi - \zeta|^{s} } \leq c |\xi - \zeta|^{s-1} |\zeta|,
\end{equation}
whose proof we postpone, to obtain
\begin{align*}
&\sum_{j=1}^{n} \int_{|\zeta| < |\xi|/2} (|\xi|^{s} - |\xi - \zeta|^{s}) \hat{\uu}_{j}(\zeta) (\xi - \zeta)_{j} \hat{\BB}_{k}(\xi - \zeta) \, \rd \zeta \\
&\qquad \qquad \qquad \leq c \int |\zeta| |\hat{\uu}(\zeta)|  |\xi - \zeta|^{s} |\hat{\BB}(\xi - \zeta)| \, \rd \zeta.
\end{align*}
By Young's inequality, the $L^{2}$ norm of the above integral expression is bounded above by
\begin{align*}
&\normBig{|\zeta| |\hat{\uu}(\zeta)|}_{L^{1}} \normBig{|\eta|^{s} |\hat{\BB}(\eta)|}_{L^{2}} \\
&\qquad \leq \bignorm{\frac{1}{(1 + |\zeta|^{2})^{s/2}}}_{L^{2}} \normBig{(1 + |\zeta|^{2})^{s/2} |\zeta| |\hat{\uu}(\zeta)|}_{L^{2}} \normBig{|\eta|^{s} |\hat{\BB}(\eta)|}_{L^{2}} \\
&\qquad \leq c \norm{\Grad \uu}_{H^{s}} \norm{\BB}_{\dot{H}^{s}},
\end{align*}
since $(1 + |\zeta|^{2})^{-s/2} \in L^{2}$ as $s > n/2$.

In the second region $|\zeta| \geq |\xi|/2$, we have $|\xi| \leq 2 |\zeta|$ and $|\xi - \zeta| \leq 3|\zeta|$. So
\[
\abs{ |\xi|^{s} - |\xi - \zeta|^{s} } \leq c |\zeta|^{s},
\]
hence
\begin{align*}
&\sum_{j=1}^{n} \int_{|\zeta| \geq |\xi|/2} (|\xi|^{s} - |\xi - \zeta|^{s}) \hat{\uu}_{j}(\zeta) (\xi - \zeta)_{j} \hat{\BB}_{k}(\xi - \zeta) \, \rd \zeta \\
&\qquad \qquad \qquad \leq c \int |\zeta|^{s+1} |\hat{\uu}(\zeta)| |\hat{\BB}(\xi - \zeta)| \, \rd \zeta.
\end{align*}
The $L^{2}$ norm of the above integral expression is bounded by
\begin{align*}
&\normBig{|\zeta|^{s+1} |\hat{\uu}(\zeta)|}_{L^{2}} \normBig{|\hat{\BB}(\eta)|}_{L^{1}} \\
&\qquad \leq \normBig{|\zeta|^{s+1} |\hat{\uu}(\zeta)|}_{L^{2}} \bignorm{\frac{1}{(1 + |\eta|^{2})^{s/2}}}_{L^{2}} \normBig{(1 + |\eta|^{2})^{s/2} |\hat{\BB}(\eta)|}_{L^{2}} \\
&\qquad \leq c \norm{\Grad \uu}_{\dot{H}^{s}} \norm{\BB}_{H^{s}},
\end{align*}
since $s > n/2$. This completes the proof when $\uu, \BB \in C_{c}^{\infty}(\R^{n})$, and the general case follows by density of $C_{c}^{\infty}(\R^{n})$ in $H^{s}(\R^{n})$.

It remains to prove inequality \eqref{eqn:GradientEstimate}: given $\xi$ and $\zeta$, let $h(t) = |\xi - t\zeta|^{s}$. As $|\zeta| < |\xi|/2$, $h$ is smooth on $[0,1]$. Now
\[
h'(t) = -s |\xi - t \zeta|^{s-2} (\xi - t \zeta) \cdot \zeta,
\]
so applying the mean value theorem to $h$ on $[0,1]$ we obtain
\[
\abs{ |\xi|^{s} - |\xi - \zeta|^{s} } \leq \max_{t \in [0,1]} |h'(t)| \leq s |\zeta| \max_{t \in [0,1]} |\xi - t \zeta|^{s-1}.
\]
As $|\zeta| < |\xi|/2$, for all $t \in [0,1]$,
\[
\frac{|\xi|}{2} \leq |\xi - t \zeta| \leq \frac{3|\xi|}{2};
\]
in particular $\frac{|\xi|}{2} \leq |\xi - \zeta|$ and so
\[
|\xi - t \zeta| \leq \frac{3|\xi|}{2} \leq 3|\xi - \zeta|,
\]
whence \eqref{eqn:GradientEstimate} follows.
\end{proof}

Before proceeding, let us note that the result of Theorem~\ref{thm:Commutator} cannot be extended to the case $s=1$ when $n=2$: in Appendix~\ref{app:Counterexample} we give an example to show that the inequality
\begin{equation}
\label{eqn:False2DIneq}
\norm{\pd_{k} [ (\uu \cdot \Grad) \BB] - (\uu \cdot \Grad) (\pd_{k} \BB)}_{L^{2}} \leq c \norm{\Grad \uu}_{H^{1}} \norm{\BB}_{H^{1}}
\end{equation}
cannot hold in dimension $2$, by exhibiting a pair of divergence-free functions $\uu$ and $\BB$ for which the right-hand side is finite, but the left-hand side is infinite. As a result, it is clear that if we were to try to prove local existence with initial data in $H^{s}$ for $s = n/2$ then a different approach would be required.

Using the fact that, when $\uu$ is divergence-free,
\[
\inner{(\uu \cdot \Grad) (\Lambda^{s}\BB)}{\Lambda^{s}\BB} = 0,
\]
we immediately obtain the following corollary of Theorem~\ref{thm:Commutator}.

\begin{cor}
\label{cor:Commutator}
Given $s > n/2$, there is a constant $c = c(n,s)$ such that, for all $\uu, \BB$ with $\Grad \uu, \BB \in H^{s}(\R^{n})$ and $\Div \uu = 0$,
\[
\abs{\inner{\Lambda^{s} [(\uu \cdot \Grad) \BB]}{\Lambda^{s} \BB}} \leq c \norm{\Grad \uu}_{H^{s}} \norm{\BB}_{H^{s}}^{2}.
\]
\end{cor}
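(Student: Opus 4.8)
The plan is to exploit the splitting already announced just before the statement: writing
\[
\Lambda^{s}[(\uu \cdot \Grad)\BB] = \big( \Lambda^{s}[(\uu \cdot \Grad)\BB] - (\uu \cdot \Grad)(\Lambda^{s}\BB) \big) + (\uu \cdot \Grad)(\Lambda^{s}\BB),
\]
and then pairing both sides with $\Lambda^{s}\BB$ in $L^{2}(\R^{n})$. The whole point is that the second piece contributes nothing to the inner product when $\uu$ is divergence-free, so only the commutator survives, and that is precisely the quantity controlled by Theorem~\ref{thm:Commutator}.

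First I would establish the cancellation $\inner{(\uu \cdot \Grad)(\Lambda^{s}\BB)}{\Lambda^{s}\BB} = 0$. Setting $\vv = \Lambda^{s}\BB$, this is the standard divergence-free identity: since $(\uu \cdot \Grad)\vv \cdot \vv = \tfrac{1}{2}\, \uu \cdot \Grad(|\vv|^{2})$, an integration by parts gives $\int_{\R^{n}} (\uu \cdot \Grad)\vv \cdot \vv \, \rd x = -\tfrac{1}{2}\int_{\R^{n}} (\Div \uu)\,|\vv|^{2}\,\rd x = 0$. As in the proof of Theorem~\ref{thm:Commutator}, it is cleanest to verify this first for $\uu, \BB \in C_{c}^{\infty}(\R^{n})$, where every integral converges absolutely and the boundary terms vanish, and then recover the general case by density.

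Next I would bound the surviving commutator term by Cauchy--Schwarz followed by Theorem~\ref{thm:Commutator}:
\[
\abs{\inner{\Lambda^{s}[(\uu \cdot \Grad)\BB] - (\uu \cdot \Grad)(\Lambda^{s}\BB)}{\Lambda^{s}\BB}} \leq \norm{\Lambda^{s}[(\uu \cdot \Grad)\BB] - (\uu \cdot \Grad)(\Lambda^{s}\BB)}_{L^{2}} \, \norm{\Lambda^{s}\BB}_{L^{2}}.
\]
Applying \eqref{eqn:Commutator} to the first factor and noting $\norm{\Lambda^{s}\BB}_{L^{2}} = \norm{\BB}_{\dot{H}^{s}} \leq \norm{\BB}_{H^{s}}$ for the second then yields the claimed estimate, with the same constant $c = c(n,s)$ as in Theorem~\ref{thm:Commutator}.

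There is no serious obstacle here; given Theorem~\ref{thm:Commutator} the corollary is genuinely immediate, which is why the text can assert it without elaboration. The only point requiring any care is the justification of the integration by parts underlying the cancellation—specifically, ensuring that $(\uu \cdot \Grad)(\Lambda^{s}\BB)$ and $\Lambda^{s}\BB$ are jointly integrable so that the boundary term truly disappears. This is exactly why I would perform the computation on $C_{c}^{\infty}(\R^{n})$ first and invoke density, mirroring the reduction already used in proving Theorem~\ref{thm:Commutator}.
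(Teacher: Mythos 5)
Your proposal is correct and matches the paper's argument exactly: the paper derives the corollary from Theorem~\ref{thm:Commutator} precisely by invoking the cancellation $\inner{(\uu \cdot \Grad)(\Lambda^{s}\BB)}{\Lambda^{s}\BB} = 0$ for divergence-free $\uu$, after which Cauchy--Schwarz and the bound $\norm{\Lambda^{s}\BB}_{L^{2}} \leq \norm{\BB}_{H^{s}}$ give the estimate. Your extra care in justifying the integration by parts on $C_{c}^{\infty}(\R^{n})$ and concluding by density is a sound elaboration of a step the paper leaves implicit.
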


We will use Corollary~\ref{cor:Commutator} in the next section to prove local existence of solutions to equations~\eqref{eqn:MHD} with initial data in $H^{s}$ for $s > n/2$.

\section{Local existence for viscous non-resistive magnetohydrodynamics}
\label{sec:LocalExistenceMHD}

We return to the equations
\begin{subequations}
\begin{align}
\frac{\pd \uu}{\pd t} + (\uu \cdot \Grad) \uu - \nu \Laplace \uu + \Grad p_{*} &= (\BB \cdot \Grad) \BB, \tag{\ref{eqn:MHD-u}} \\
\frac{\pd \BB}{\pd t} + (\uu \cdot \Grad) \BB &= (\BB \cdot \Grad) \uu, \tag{\ref{eqn:MHD-B}} \\
\Div \uu = \Div \BB &= 0 \tag{\ref{eqn:MHD-Div}}
\end{align}
\end{subequations}
on the whole of $\R^{n}$, with initial data $\uu_{0}, \BB_{0} \in H^{s}(\R^{n})$ satisfying $\Div \uu_{0} = \Div \BB_{0} = 0$, for $s > n/2$. We will prove the following theorem.

\addtocounter{equation}{-1}

\begin{repthm}{thm:MHDLocalExistence}
For $s > n/2$, and initial data $\uu_{0}, \BB_{0} \in H^{s}(\R^{n})$ with $\Div \uu_{0} = \Div \BB_{0} = 0$, there exists a time $T_{*} = T_{*}(s, \norm{\uu_{0}}_{H^{s}}, \norm{\BB_{0}}_{H^{s}}) > 0$ such that the equations \eqref{eqn:MHD} have a unique solution $(\uu, \BB)$, with $\uu, \BB \in C([0, T_{*}]; H^{s}(\R^{n}))$.
\end{repthm}

The general strategy of the proof is similar to that for proving existence of solutions to the Navier--Stokes and Euler equations which can be found in Section~3.2 of \cite{book:MajdaBertozzi}, for example. First, we show that the solutions $(\uu^{R}, \BB^{R})$ of some smoothed version of the equations exist and are uniformly bounded in $H^{s}$. We then show they are Cauchy in the $L^{2}$ norm as $R \to \infty$. By interpolation, $(\uu^{R}, \BB^{R}) \to (\uu, \BB)$ in any $H^{s'}$ for $0 < s' < s$, which implies that $(\uu, \BB)$ solve the original equations.

Define the Fourier truncation $\Ss_{R}$ as follows:
\[
\widehat{\Ss_{R} f}(\xi) = \mathbbm{1}_{B_{R}}(\xi) \hat{f}(\xi),
\]
where $B_{R}$ denotes the ball of radius $R$ centered at the origin. Note that
\begin{align*}
\norm{\Ss_{R}f - f}_{H^{s}}^{2} &= \int_{(B_{R})^{c}} (1 + |\xi|^{2})^{s} |\hat{f}(\xi)|^{2} \, \rd \xi \\
&= \int_{(B_{R})^{c}} \frac{1}{(1 + |\xi|^{2})^{k}} (1 + |\xi|^{2})^{s+k} |\hat{f}(\xi)|^{2} \, \rd \xi \\
&\leq \frac{1}{(1 + R^{2})^{k}} \int_{(B_{R})^{c}} (1 + |\xi|^{2})^{s+k} |\hat{f}(\xi)|^{2} \, \rd \xi \\
&\leq \frac{C}{R^{2k}} \norm{f}_{H^{s+k}}^{2}.
\end{align*}
Hence
\begin{align}
\norm{\Ss_{R} f - f}_{H^{s}} &\leq C (1/R)^{k} \norm{f}_{H^{s+k}}, \label{eqn:MollifierProp1}\\
\norm{\Ss_{R} f - \Ss_{R'} f}_{H^{s}} &\leq C \max\{(1/R)^{k}, (1/R')^{k}\} \norm{f}_{H^{s+k}}. \label{eqn:MollifierProp2}
\end{align}

We consider the truncated MHD equations on the whole of $\R^{n}$:
\begin{subequations}
\label{eqn:MHD-Mollified}
\begin{align}
\frac{\pd \uu^{R}}{\pd t} - \nu \Laplace \uu^{R} +  \Grad p_{*}^{R} &= \Ss_{R}[(\BB^{R} \cdot \Grad) \BB^{R}] - \Ss_{R}[(\uu^{R} \cdot \Grad) \uu^{R}], \label{eqn:MHD-Mollified-u} \\
\frac{\pd \BB^{R}}{\pd t} &= \Ss_{R}[(\BB^{R} \cdot \Grad) \uu^{R}] - \Ss_{R}[(\uu^{R} \cdot \Grad) \BB^{R}], \label{eqn:MHD-Mollified-B} \\
\Div \uu^{R} &= \Div \BB^{R} = 0,
\end{align}
\end{subequations}
with initial data $\Ss_{R} \uu_{0}, \Ss_{R} \BB_{0}$. By taking the cutoff initial data as we have, we ensure that $\uu^{R}, \BB^{R}$ lie in the space
\[
V_{R} := \{ f \in L^{2}(\R^{n}) : \hat{f} \text{ is supported in } B_{R} \},
\]
as the truncations are invariant under the flow of the equations. The Fourier cutoffs act like mollifiers, smoothing the equation; in particular, on the space $V_{R}$ it is easy to show that
\[
F(\uu^{R}, \BB^{R}) := \Ss_{R}[(\uu^{R} \cdot \Grad) \BB^{R}]
\]
is Lipschitz in $\uu^{R}$ and $\BB^{R}$. Hence, by Picard's theorem for infinite-dimensional ODEs (see Theorem~3.1 in \cite{book:MajdaBertozzi}, for example), there exists a solution $(\uu^{R}, \BB^{R})$ in $V_{R}$ to \eqref{eqn:MHD-Mollified} for some  time interval $[0, T(R)]$. The solution will exist as long as $\norm{\uu^{R}}_{H^{s}}$ and $\norm{\BB^{R}}_{H^{s}}$ remain finite.

\begin{prop}
\label{prop:Uniform}
Given initial data $\uu_{0}, \BB_{0} \in H^{s}(\R^{n})$ with $s > n/2$, there exists a time $T_{*}$ such that the quantities
\[
\sup_{t \in [0,T_{*}]} \norm{\uu^{R}(t)}_{H^{s}}\text{, }\sup_{t \in [0,T_{*}]} \norm{\BB^{R}(t)}_{H^{s}} \text{, } \int_{0}^{T_{*}} \norm{\Grad \uu^{R} (t)}_{H^{s}}^{2} \, \rd t
\]
are bounded uniformly in $R$.
\end{prop}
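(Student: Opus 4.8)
The plan is to derive an energy inequality for the $H^s$ norms of $\uu^R$ and $\BB^R$ and then use a continuation/bootstrapping argument to extract a time $T_*$ independent of $R$. First I would apply $\Lambda^s$ to the truncated equations \eqref{eqn:MHD-Mollified}, pair with $\Lambda^s \uu^R$ and $\Lambda^s \BB^R$ respectively in $L^2$, and add. Because $\Ss_R$ is a Fourier projection it is self-adjoint and commutes with $\Lambda^s$, and $\Ss_R \uu^R = \uu^R$, $\Ss_R \BB^R = \BB^R$ on $V_R$, so the truncations cause no loss and may be discarded inside the inner products. The viscous term contributes $+\nu \norm{\Grad \uu^R}_{\dot H^s}^2$ on the left, and the pressure term drops out since $\uu^R$ is divergence-free. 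I would work with $\dot H^s$ throughout and combine with the elementary $L^2$ energy estimate at the end to recover full $H^s$ norms.

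The four nonlinear terms must then be controlled. The terms $\inner{\Lambda^s[(\uu^R \cdot \Grad)\uu^R]}{\Lambda^s \uu^R}$ and $\inner{\Lambda^s[(\uu^R \cdot \Grad)\BB^R]}{\Lambda^s \BB^R}$ are of the advective type: for each I would insert and subtract the commutator, using that $\inner{(\uu^R \cdot \Grad)(\Lambda^s \vv)}{\Lambda^s \vv} = 0$ for divergence-free $\uu^R$, so only the commutator survives. The crucial point is the $(\uu^R \cdot \Grad)\BB^R$ term: here I would invoke Corollary~\ref{cor:Commutator} to bound $\abs{\inner{\Lambda^s[(\uu^R \cdot \Grad)\BB^R]}{\Lambda^s \BB^R}}$ by $c\norm{\Grad \uu^R}_{H^s}\norm{\BB^R}_{H^s}^2$; this is exactly what avoids needing control of $\norm{\Grad \BB^R}_{H^s}$, for which there is no smoothing. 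For the remaining cross terms $\inner{\Lambda^s[(\BB^R \cdot \Grad)\BB^R]}{\Lambda^s \uu^R}$ and $\inner{\Lambda^s[(\BB^R \cdot \Grad)\uu^R]}{\Lambda^s \BB^R}$, I would integrate by parts to exploit $\Div \BB^R = 0$ and note that the two $\inner{\Lambda^s[(\BB^R\cdot\Grad)\BB^R]}{\Lambda^s\uu^R}$ and $\inner{\Lambda^s[(\BB^R\cdot\Grad)\uu^R]}{\Lambda^s\BB^R}$ contributions partially cancel; any surviving pieces are estimated by the product estimate (the algebra property of $H^s$) in the form $c\norm{\BB^R}_{H^s}\norm{\Grad \uu^R}_{H^s}\norm{\BB^R}_{H^s}$, which contains a factor of $\norm{\Grad\uu^R}_{H^s}$ and hence can be absorbed using Young's inequality against the viscous term.

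Setting $y(t) = \norm{\uu^R(t)}_{H^s}^2 + \norm{\BB^R(t)}_{H^s}^2$, the goal is an inequality of the schematic form
\[
\frac{1}{2}\frac{\rd}{\rd t} y + \nu \norm{\Grad \uu^R}_{\dot H^s}^2 \leq c\,\norm{\Grad \uu^R}_{H^s}\, y + (\text{absorbable terms}),
\]
where every troublesome contribution carries a factor $\norm{\Grad \uu^R}_{H^s}$. Using $\norm{\Grad \uu^R}_{H^s}^2 \leq \norm{\uu^R}_{H^s}^2 + \norm{\Grad\uu^R}_{\dot H^s}^2$ together with Young's inequality $c\norm{\Grad\uu^R}_{H^s}\,y \leq \tfrac{\nu}{2}\norm{\Grad\uu^R}_{\dot H^s}^2 + C(\nu)\,y^2 + C y^{3/2}$, I would move a full $\tfrac{\nu}{2}\norm{\Grad\uu^R}_{\dot H^s}^2$ to the left and arrive at a differential inequality $y' \leq C(y^2 + y^{3/2} + y)$, which is $R$-independent. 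A comparison argument (the Riccati-type bound) then yields a time $T_* = T_*(s,\norm{\uu_0}_{H^s},\norm{\BB_0}_{H^s})$, depending only on $y(0) \leq \norm{\uu_0}_{H^s}^2 + \norm{\BB_0}_{H^s}^2$ since $\Ss_R$ does not increase $H^s$ norms, on which $y$ stays bounded uniformly in $R$; integrating the retained viscous term over $[0,T_*]$ gives the stated uniform bound on $\int_0^{T_*}\norm{\Grad\uu^R}_{H^s}^2\,\rd t$. The main obstacle, and the reason the estimate works at all, is the $(\uu^R\cdot\Grad)\BB^R$ term: without Corollary~\ref{cor:Commutator} one is forced to bound $\norm{\Grad\BB^R}_{H^s}$, which is uncontrolled in the absence of magnetic diffusion, so the entire argument hinges on the commutator structure established in Theorem~\ref{thm:Commutator}.
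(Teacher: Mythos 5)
Your overall architecture is exactly the paper's: apply $\Lambda^{s}$, pair with $\Lambda^{s}\uu^{R}$, $\Lambda^{s}\BB^{R}$, use $\Ss_{R}\uu^{R}=\uu^{R}$ on $V_{R}$ to discard the truncations, handle the dangerous term $\inner{\Lambda^{s}[(\uu^{R}\cdot\Grad)\BB^{R}]}{\Lambda^{s}\BB^{R}}$ by Corollary~\ref{cor:Commutator}, close with Young's inequality against the dissipation and a Riccati-type comparison. Your treatment of $(\uu^{R}\cdot\Grad)\uu^{R}$ via the commutator cancellation is a harmless variant (the paper bounds it directly by the algebra property, which works since $\Grad\uu^{R}$ appears anyway), and your slightly different Gronwall bookkeeping ($y^{2}+y^{3/2}$ rather than the paper's $y^{2}$ after folding in the $L^{2}$ energy equality) is fine.

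However, your proposed handling of the two cross terms is wrong as stated, and this is a genuine gap. If you integrate by parts so that the principal parts cancel, i.e.\ use
\[
\inner{(\BB^{R}\cdot\Grad)\Lambda^{s}\BB^{R}}{\Lambda^{s}\uu^{R}} + \inner{(\BB^{R}\cdot\Grad)\Lambda^{s}\uu^{R}}{\Lambda^{s}\BB^{R}} = 0,
\]
then the ``surviving pieces'' are not products but the commutators $\inner{[\Lambda^{s},\BB^{R}\cdot\Grad]\BB^{R}}{\Lambda^{s}\uu^{R}}$ and $\inner{[\Lambda^{s},\BB^{R}\cdot\Grad]\uu^{R}}{\Lambda^{s}\BB^{R}}$, and the algebra property of $H^{s}$ does not bound these by $c\norm{\BB^{R}}_{H^{s}}^{2}\norm{\Grad\uu^{R}}_{H^{s}}$. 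Every available commutator bound places the extra derivative on the \emph{advecting} field, which here is $\BB^{R}$: Theorem~\ref{thm:Commutator} gives $\norm{\Grad\BB^{R}}_{H^{s}}$ and Kato--Ponce gives $\norm{\Grad\BB^{R}}_{L^{\infty}}$, neither of which is controlled by $\norm{\BB^{R}}_{H^{s}}$ for $n/2 < s \leq n/2+1$ --- this is precisely the obstruction the whole paper is built to avoid, and it cannot be reintroduced through the cross terms. The fix (and the paper's actual route) is that no cancellation between the cross terms is needed: estimate each one directly. For $\inner{\Lambda^{s}[(\BB^{R}\cdot\Grad)\uu^{R}]}{\Lambda^{s}\BB^{R}}$ the algebra property immediately gives the bound $c\norm{\BB^{R}}_{H^{s}}\norm{\Grad\uu^{R}}_{H^{s}}\norm{\BB^{R}}_{H^{s}}$; for $\inner{\Lambda^{s}[(\BB^{R}\cdot\Grad)\BB^{R}]}{\Lambda^{s}\uu^{R}}$ use $\Div\BB^{R}=0$ to write $(\BB^{R}\cdot\Grad)\BB^{R} = \Div(\BB^{R}\otimes\BB^{R})$, integrate by parts to move the single derivative onto $\uu^{R}$, and bound $\abs{\sum_{i,j}\int \Lambda^{s}[\BB^{R}_{i}\BB^{R}_{j}]\,\Lambda^{s}\pd_{i}\uu^{R}_{j}} \leq c\norm{\BB^{R}}_{H^{s}}^{2}\norm{\Grad\uu^{R}}_{H^{s}}$ by the algebra property. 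With that one step replaced, your proof coincides with the paper's.
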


Before embarking on the proof, we first prove a simple energy estimate: take the inner product of \eqref{eqn:MHD-Mollified-u} with  $\uu^{R}$ and the inner product of \eqref{eqn:MHD-Mollified-B} with $\BB^{R}$, and add to obtain
\begin{equation}
\label{eqn:EnergyEquality}
\frac{1}{2} \frac{\rd}{\rd t} ( \norm{\uu^{R}}_{L^{2}}^{2} + \norm{\BB^{R}}_{L^{2}}^{2}) + \nu \norm{\Grad \uu^{R}}_{L^{2}}^{2} = 0;
\end{equation}
integrating and using the fact that $\norm{\uu^{R}(0)}_{L^{2}} \leq \norm{\uu_{0}}_{L^{2}}$ and $\norm{\BB^{R}(0)}_{L^{2}} \leq \norm{\BB_{0}}_{L^{2}}$ yields
\begin{equation}
\label{eqn:EnergyEstimate}
\norm{\uu^{R}(t)}_{L^{2}}^{2} + \norm{\BB^{R}(t)}_{L^{2}}^{2} + 2\nu \int_{0}^{t} \norm{\Grad \uu^{R}(s)}_{L^{2}}^{2} \, \rd s \leq \norm{\uu_{0}}_{L^{2}}^{2} + \norm{\BB_{0}}_{L^{2}}^{2}. 
\end{equation}

\begin{proof}[Proof of Proposition~\ref{prop:Uniform}]
For $s > n/2$, apply $\Lambda^{s}$ to both equations:
\begin{align*}
\frac{\pd}{\pd t} \Lambda^{s} \uu^{R} - \nu \Laplace \Lambda^{s} \uu^{R} + \Grad \Lambda^{s} p_{*}^{R} &= \Ss_{R} \Lambda^{s} [(\BB^{R} \cdot \Grad) \BB^{R}]  - \Ss_{R} \Lambda^{s} [(\uu^{R} \cdot \Grad) \uu^{R}],\\
\frac{\pd}{\pd t} \Lambda^{s} \BB^{R} &= \Ss_{R} \Lambda^{s} [(\BB^{R} \cdot \Grad) \uu^{R}]  - \Ss_{R} \Lambda^{s} [(\uu^{R} \cdot \Grad) \BB^{R}].
\end{align*}
Take the inner product of the first equation with $\Lambda^{s} \uu^{R}$, and the inner product of the second equation with $\Lambda^{s} \BB^{R}$, to obtain
\begin{align*}
\frac{1}{2} \frac{\rd}{\rd t} \norm{\Lambda^{s} \uu^{R}}_{L^{2}}^{2} + \nu \norm{\Lambda^{s} \Grad \uu^{R}}_{L^{2}}^{2} &= \inner{\Lambda^{s} [(\BB^{R} \cdot \Grad) \BB^{R}]}{\Lambda^{s} \uu^{R}} \\ &\qquad - \inner{\Lambda^{s} [(\uu^{R} \cdot \Grad) \uu^{R}]}{\Lambda^{s} \uu^{R}},\\
\frac{1}{2} \frac{\rd}{\rd t} \norm{\Lambda^{s} \BB^{R}}_{L^{2}}^{2} &= \inner{ \Lambda^{s} [(\BB^{R} \cdot \Grad) \uu^{R}] }{\Lambda^{s} \BB^{R}} \\ &\qquad  - \inner{ \Lambda^{s} [(\uu^{R} \cdot \Grad) \BB^{R}] }{\Lambda^{s} \BB^{R}}.
\end{align*}
Note that we have used the fact that $\Ss_{R} \uu^{R} = \uu^{R}$, since $\uu^{R} \in V_{R}$.

The most difficult term, $\inner{ \Lambda^{s} [(\uu^{R} \cdot \Grad) \BB^{R}] }{\Lambda^{s} \BB^{R}}$, is dealt with easily by our commutator estimate (Corollary~\ref{cor:Commutator}):
\[
\abs{\inner{\Lambda^{s} [(\uu^{R} \cdot \Grad) \BB^{R}]}{\Lambda^{s} \BB^{R}}} \leq c \norm{\Grad \uu^{R}}_{H^{s}} \norm{\BB^{R}}_{H^{s}}^{2}.
\]
The other three terms can be estimated using the fact that $H^{s}$ is an algebra for $s > n/2$. Two follow directly:
\begin{align*}
\abs{\inner{\Lambda^{s} [(\uu^{R} \cdot \Grad) \uu^{R}]}{\Lambda^{s} \uu^{R}}} &\leq c \norm{\Grad \uu^{R}}_{H^{s}} \norm{\uu^{R}}_{H^{s}}^{2}, \\
\abs{\inner{\Lambda^{s} [(\BB^{R} \cdot \Grad) \uu^{R}]}{\Lambda^{s} \BB^{R}}} &\leq c \norm{\Grad \uu^{R}}_{H^{s}} \norm{\BB^{R}}_{H^{s}}^{2},
\end{align*}
while the remaining term requries an integration by parts:
\begin{align*}
\abs{\inner{\Lambda^{s} [(\BB^{R} \cdot \Grad) \BB^{R}]}{\Lambda^{s} \uu^{R}}} &= \abs{- \sum_{i,j=1}^{n} \int_{\R^{n}} \Lambda^{s} [\BB^{R}_{i} \BB^{R}_{j}] \Lambda^{s} \pd_{i} \uu^{R}_{j}} \\
&\leq c \norm{\BB^{R}}_{H^{s}}^{2} \norm{\Grad \uu^{R}}_{H^{s}}. 
\end{align*}
Hence
\[
\frac{1}{2} \frac{\rd}{\rd t} ( \norm{\uu^{R}}_{\dot{H}^{s}}^{2} + \norm{\BB^{R}}_{\dot{H}^{s}}^{2}) + \nu \norm{\Grad \uu^{R}}_{\dot{H}^{s}}^{2} \leq c \norm{\Grad \uu^{R}}_{H^{s}} (\norm{\uu^{R}}_{H^{s}}^{2} + \norm{\BB^{R}}_{H^{s}}^{2}).
\]
Combining this with the energy estimate \eqref{eqn:EnergyEquality} yields
\[
\frac{1}{2} \frac{\rd}{\rd t} ( \norm{\uu^{R}}_{H^{s}}^{2} + \norm{\BB^{R}}_{H^{s}}^{2}) + \nu \norm{\Grad \uu^{R}}_{H^{s}}^{2} \leq c \norm{\Grad \uu^{R}}_{H^{s}} (\norm{\uu^{R}}_{H^{s}}^{2} + \norm{\BB^{R}}_{H^{s}}^{2}).
\]
By Young's inequality,
\[
\frac{\rd}{\rd t} ( \norm{\uu^{R}}_{H^{s}}^{2} + \norm{\BB^{R}}_{H^{s}}^{2}) + \nu \norm{\Grad \uu^{R}}_{H^{s}}^{2} \leq \frac{c}{\nu} (\norm{\uu^{R}}_{H^{s}}^{2} + \norm{\BB^{R}}_{H^{s}}^{2})^{2}.
\]
Setting $Y(t) = ( \norm{\uu^{R}}_{H^{s}}^{2} + \norm{\BB^{R}}_{H^{s}}^{2})$ and $Y_{0} = ( \norm{\uu_{0}}_{H^{s}}^{2} + \norm{\BB_{0}}_{H^{s}}^{2})$, a standard Gronwall-type argument shows that
\begin{equation}
\label{eqn:UniformBoundY}
Y(t) \leq \frac{\nu Y_{0}}{\nu - CTY_{0}}
\end{equation}
for all $t \in [0,T]$. So provided we choose $T_{*} < CY_{0}/\nu$, $\norm{\uu^{R}}_{H^{s}}$ and $\norm{\BB^{R}}_{H^{s}}$ remain bounded on $[0,T_{*}]$ independently of $R$, and $\int_{0}^{T_{*}} \norm{\Grad \uu^{R}(t)}_{H^{s}}^{2} \, \rd t$ is bounded uniformly in $R$.
\end{proof}

Having proven these uniform estimates, we could use the compactness theorem of \cite{art:Aubin1963} and \cite{book:Lions1969} (see also \cite{art:Simon1987}) to extract a subsequence $(\uu^{R_{m}}, \BB^{R_{m}})$ that converges strongly to $(\uu, \BB)$ in some sense; while this approach is natural when working on a bounded domain, on the whole space one only obtains the requisite strong convergence on compact subsets, and one must then appeal to the argument of, for example, \cite{book:CDGG}, \S2.2.4, to show that, indeed, the nonlinear terms converge as required.

In order to avoid this, we instead follow the approach of, for example, \cite{book:MajdaBertozzi} and show that $\uu^{R}$ and $\BB^{R}$ converge strongly in $L^{\infty}(0, T_{*}; L^{2}(\R^{n}))$, by showing they are Cauchy as $R \to \infty$.

\begin{prop}
\label{prop:Cauchy}
The family $(\uu^{R}, \BB^{R})$ of solutions of \eqref{eqn:MHD-Mollified} are Cauchy (as $R \to \infty$) in $L^{\infty}(0, T; L^{2}(\R^{n}))$. 
\end{prop}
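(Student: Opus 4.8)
The plan is to follow the standard route (cf.\ Section~3.2 of \cite{book:MajdaBertozzi}). Fix $R' > R$, set $\ww = \uu^{R} - \uu^{R'}$ and $\zz = \BB^{R} - \BB^{R'}$, and subtract the two copies of \eqref{eqn:MHD-Mollified} to obtain evolution equations for $(\ww, \zz)$. Testing the $\ww$-equation with $\ww$ and the $\zz$-equation with $\zz$ and adding, the pressure difference drops out because $\Div \ww = 0$, the viscous term contributes $+\nu \norm{\Grad \ww}_{L^{2}}^{2}$ on the left, and the whole problem reduces to estimating the difference of the nonlinear terms. Since $\ww(0) = (\Ss_{R} - \Ss_{R'})\uu_{0}$ and $\zz(0) = (\Ss_{R} - \Ss_{R'})\BB_{0}$ are $O(R^{-s})$ in $L^{2}$ by \eqref{eqn:MollifierProp2} (applied at base regularity $0$), the aim is a differential inequality
\[
\frac{\rd}{\rd t} Y + \nu \norm{\Grad \ww}_{L^{2}}^{2} \leq g(t)\, Y + \delta(R), \qquad Y := \norm{\ww}_{L^{2}}^{2} + \norm{\zz}_{L^{2}}^{2},
\]
with $\int_{0}^{T} g < \infty$ uniformly in $R, R'$ and $\delta(R) \to 0$; Gronwall's inequality together with $Y(0) \to 0$ then gives $\sup_{[0,T]} Y \to 0$ as $R \to \infty$, which is precisely the Cauchy property.

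For each of the four nonlinearities I would split
\[
\Ss_{R}[N(\uu^{R}, \BB^{R})] - \Ss_{R'}[N(\uu^{R'}, \BB^{R'})] = \Ss_{R}\big[ N(\uu^{R}, \BB^{R}) - N(\uu^{R'}, \BB^{R'}) \big] + (\Ss_{R} - \Ss_{R'}) N(\uu^{R'}, \BB^{R'}),
\]
expand the first bracket by bilinearity into a piece carrying $\ww$ and a piece carrying $\zz$, and keep $\Ss_{R}$ bounded on $L^{2}$. The mollifier-difference terms are harmless: each $N(\uu^{R'}, \BB^{R'})$ is bounded in $H^{s-1}$ uniformly in $R'$ (by the product estimate for multiplication by $H^{s}$ functions, using Proposition~\ref{prop:Uniform}), so \eqref{eqn:MollifierProp2} makes them $O(R^{-(s-1)})$ in $L^{2}$, contributing to $\delta(R)$. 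The transport terms $(\uu^{R} \cdot \Grad)\zz$ and $(\uu^{R} \cdot \Grad)\ww$ would vanish against $\zz$ and $\ww$ by $\Div \uu^{R} = 0$; the $\Ss_{R}$ that spoils the cancellation is controlled by writing $\Ss_{R}\zz = \zz + (I - \Ss_{R})\BB^{R'}$ and integrating by parts, leaving an error $\lesssim \norm{\Grad (I-\Ss_{R})\BB^{R'}}_{L^{2}} \lesssim R^{-(s-1)}$, again part of $\delta(R)$. The remaining quadratic terms are governed by a single routing principle: whenever the derivative can be integrated by parts onto $\ww$ it is absorbed into $\nu \norm{\Grad \ww}_{L^{2}}^{2}$, and whenever it sits on $\uu$ it is controlled by $\Grad \uu^{R} \in L^{2}(0,T; L^{\infty})$ (from $\int_{0}^{T}\norm{\Grad \uu^{R}}_{H^{s}}^{2} < \infty$ and $H^{s} \hookrightarrow L^{\infty}$), feeding the integrable coefficient $g(t)$ — with one exception.

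The exception, and the main obstacle, is the transport remainder $\inner{(\ww \cdot \Grad)\BB^{R'}}{\zz}$ in the $\BB$-equation. Here $\ww$ is the transporting field, so $\Div \ww = 0$ prevents moving the derivative onto it, and integration by parts only shifts the derivative between the two non-dissipative fields $\BB^{R'}$ and $\zz$; neither placing $\Grad \BB^{R'}$ in $L^{\infty}$ (which would need $s > 1 + n/2$) nor placing $\ww$ in $L^{\infty}$ (which gives only a fractional, and hence useless, power of $\norm{\ww}_{L^{2}}$) closes the estimate. The resolution is to spend the velocity dissipation on $\ww$. Using $\Grad \BB^{R'} \in H^{s-1} \hookrightarrow L^{p}$ with $\tfrac{1}{p} = \tfrac12 - \tfrac{s-1}{n}$, Hölder gives
\[
\abs{\inner{(\ww \cdot \Grad)\BB^{R'}}{\zz}} \leq \norm{\ww}_{L^{a}} \norm{\Grad \BB^{R'}}_{L^{p}} \norm{\zz}_{L^{2}}, \qquad a = \frac{n}{s-1},
\]
and the hypothesis $s > n/2$ is exactly what puts $a$ in the admissible interpolation range $[2, \tfrac{2n}{n-2}]$. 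Interpolating $\norm{\ww}_{L^{a}} \leq c \norm{\ww}_{L^{2}}^{1-\theta} \norm{\Grad \ww}_{L^{2}}^{\theta}$ with $\theta = \tfrac12(n - 2s + 2) \in (0,1)$ and using Young's inequality to absorb $\norm{\Grad \ww}_{L^{2}}^{\theta}$ into $\nu \norm{\Grad \ww}_{L^{2}}^{2}$ leaves $\norm{\ww}_{L^{2}}^{q} \norm{\zz}_{L^{2}}^{r}$ with $q + r = 2$, which a final application of Young bounds by $C(\norm{\ww}_{L^{2}}^{2} + \norm{\zz}_{L^{2}}^{2})$. The identity $q + r = 2$, forced by scaling, is what makes this cleanly quadratic and hence usable in Gronwall; and it is here that $s > n/2$ being \emph{strict} (so that $\theta < 1$ and genuine $L^{2}$-smallness of $\ww$ survives) is indispensable. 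At $s = n/2$ one has $\theta = 1$ and the argument breaks, consistent with the counterexample of Appendix~\ref{app:Counterexample}. (For $s \geq 1 + n/2$ the term is trivial, since then $\Grad \BB^{R'} \in L^{\infty}$.)

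Assembling the three groups yields the target inequality with $g \in L^{1}(0,T)$ uniform in $R, R'$ and $\delta(R) \to 0$. Gronwall's inequality and the smallness of $Y(0)$ then give $\sup_{t \in [0,T]} \big( \norm{\ww(t)}_{L^{2}}^{2} + \norm{\zz(t)}_{L^{2}}^{2} \big) \to 0$ as $R \to \infty$, uniformly for $R' > R$; hence $(\uu^{R}, \BB^{R})$ is Cauchy in $L^{\infty}(0, T; L^{2}(\R^{n}))$.
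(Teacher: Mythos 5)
Your proof is correct, and it follows the same overall architecture as the paper's argument --- an $L^{2}$ energy estimate for the differences $\ww = \uu^{R}-\uu^{R'}$, $\zz = \BB^{R}-\BB^{R'}$, a splitting of each nonlinearity into a mollifier-difference piece plus bilinear-difference pieces, absorption of the one dangerous term into the viscous dissipation, and Gronwall with a coefficient made integrable by Proposition~\ref{prop:Uniform} --- but several of your tactical choices genuinely differ. First, you centre the splitting at $\Ss_{R}$, so the transport cancellations $\inner{(\uu^{R}\cdot\Grad)\zz}{\zz}$ and $\inner{(\uu^{R}\cdot\Grad)\ww}{\ww}$ are only approximate and you must pay the commutation error carried by $(I-\Ss_{R})\BB^{R'}$, estimated via \eqref{eqn:MollifierProp1} at cost $O(R^{-(s-1)})$; the paper instead centres the splitting at $\Ss_{R'}$, and since $R' > R$ the differences lie in $V_{R'}$, so the analogous term \eqref{eqn:uBB3} vanishes \emph{exactly} --- cleaner bookkeeping, though your version works and is arguably more robust. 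Second, for the crux term $\inner{(\ww\cdot\Grad)\BB^{R'}}{\zz}$ --- which you correctly identify as the only term where the derivative can be routed onto neither $\ww$ nor a dissipative field --- the paper uses dimension-specific product estimates ($\norm{fg}_{L^{2}} \leq c\norm{f}_{H^{1}}\norm{g}_{H^{s-1}}$, proved separately in 2D and 3D) followed by a single application of Young's inequality against $\tfrac{\nu}{4}\norm{\ww}_{H^{1}}^{2}$; your dimension-uniform H\"older/Gagliardo--Nirenberg route with $\theta = \tfrac{n}{2}-s+1$ and the scaling identity $q+r=2$ is equivalent in strength, and has the merit of making the strict inequality $s > n/2$ (i.e.\ $\theta < 1$) visibly indispensable, nicely consistent with the counterexample of Appendix~\ref{app:Counterexample}. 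Third, you dispense with the paper's exact cancellation between two terms of \eqref{eqn:BBu} and \eqref{eqn:BuB}, estimating each individually via $\norm{\BB^{R}}_{L^{\infty}} \leq cM$ and the dissipation; that is also fine. One small repair is needed: your parenthetical claim that for $s \geq 1+n/2$ the crux term is trivial because $\Grad\BB^{R'} \in L^{\infty}$ fails at the endpoint $s = 1+n/2$ (where $\Grad\BB^{R'} \in H^{n/2} \not\hookrightarrow L^{\infty}$, and your exponents would require $p=\infty$, which the Sobolev embedding does not deliver). The fix is one line: for $s \geq 1+n/2$ run your interpolation argument with any fixed $s' \in (n/2,\,1+n/2)$ and use $H^{s}\hookrightarrow H^{s'}$ --- exactly the role played in the paper by the monotone bounds $\norm{\cdot}_{H^{\eps}} \leq \norm{\cdot}_{H^{s-1}}$ in 2D and $\norm{\cdot}_{H^{1/2}} \leq \norm{\cdot}_{H^{s-1}}$ in 3D.
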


\begin{proof}
Consider again equations~\eqref{eqn:MHD-Mollified}, and take the difference between the equations for $R$ and $R'$:
\begin{subequations}
\label{eqn:MHD-Difference}
\begin{align}
\frac{\pd}{\pd t} (\uu^{R} - \uu^{R'}) - &\nu \Laplace (\uu^{R} - \uu^{R'}) + \Grad (p_{*}^{R} - p_{*}^{R'}) \notag \\
&= \Ss_{R}[(\BB^{R} \cdot \Grad) \BB^{R}] - \Ss_{R'}[(\BB^{R'} \cdot \Grad) \BB^{R'}] \notag \\
&\qquad - \Ss_{R}[(\uu^{R} \cdot \Grad) \uu^{R}] + \Ss_{R'}[(\uu^{R'} \cdot \Grad) \uu^{R'}], \label{eqn:MHD-Difference-u} \\
\frac{\pd}{\pd t} (\BB^{R} - \BB^{R'}) &= \Ss_{R}[(\BB^{R} \cdot \Grad) \uu^{R}] - \Ss_{R'}[(\BB^{R'} \cdot \Grad) \uu^{R'}] \notag \\
&\qquad - \Ss_{R}[(\uu^{R} \cdot \Grad) \BB^{R}] + \Ss_{R'}[(\uu^{R'} \cdot \Grad) \BB^{R'}]. \label{eqn:MHD-Difference-B}
\end{align}
\end{subequations}
Take the inner product of \eqref{eqn:MHD-Difference-u} with $\uu^{R} - \uu^{R'}$ and the inner product of \eqref{eqn:MHD-Difference-B} with $\BB^{R} - \BB^{R'}$ and add to obtain
\begin{subequations}
\label{eqn:MHD-IP}
\begin{align}
&\frac{1}{2} \frac{\rd}{\rd t} \left( \norm{\uu^{R} - \uu^{R'}}_{L^{2}}^{2} + \norm{\BB^{R} - \BB^{R'}}_{L^{2}}^{2} \right) + \nu \norm{\Grad(\uu^{R} - \uu^{R'})}_{L^{2}}^{2} \notag \\
&= \inner{\Ss_{R}[(\BB^{R} \cdot \Grad) \BB^{R}] - \Ss_{R'}[(\BB^{R'} \cdot \Grad) \BB^{R'}]}{\uu^{R} - \uu^{R'}} \label{eqn:BBu} \\
&\qquad - \inner{\Ss_{R}[(\uu^{R} \cdot \Grad) \uu^{R}] - \Ss_{R'}[(\uu^{R'} \cdot \Grad) \uu^{R'}]}{\uu^{R} - \uu^{R'}} \label{eqn:uuu} \\
&\qquad + \inner{\Ss_{R}[(\BB^{R} \cdot \Grad) \uu^{R}] - \Ss_{R'}[(\BB^{R'} \cdot \Grad) \uu^{R'}]}{\BB^{R} - \BB^{R'}} \label{eqn:BuB} \\
&\qquad - \inner{\Ss_{R}[(\uu^{R} \cdot \Grad) \BB^{R}] - \Ss_{R'}[(\uu^{R'} \cdot \Grad) \BB^{R'}]}{\BB^{R} - \BB^{R'}}. \label{eqn:uBB}
\end{align}
\end{subequations}

We split each term into three parts: for \eqref{eqn:uBB}, for example, we get
\begin{subequations}
\begin{align}
&\inner{\Ss_{R}[(\uu^{R} \cdot \Grad) \BB^{R}] - \Ss_{R'}[(\uu^{R'} \cdot \Grad) \BB^{R'}]}{\BB^{R} - \BB^{R'}} \notag \\
&\qquad = \inner{(\Ss_{R} - \Ss_{R'}) [(\uu^{R} \cdot \Grad) \BB^{R}]}{\BB^{R} - \BB^{R'}} \label{eqn:uBB1} \\
&\qquad + \inner{\Ss_{R'}[((\uu^{R} - \uu^{R'}) \cdot \Grad) \BB^{R}]}{\BB^{R} - \BB^{R'}} \label{eqn:uBB2} \\
&\qquad + \inner{\Ss_{R'}[(\uu^{R'} \cdot \Grad)(\BB^{R} - \BB^{R'})]}{\BB^{R} - \BB^{R'}}. \label{eqn:uBB3}
\end{align}
\end{subequations}
Notice that \eqref{eqn:uBB3} is zero (integrating by parts and using the divergence-free condition).

For \eqref{eqn:uBB1}, we use the cutoff property \eqref{eqn:MollifierProp2} (recalling that $R' > R$) to obtain
\[
\abs{\inner{(\Ss_{R} - \Ss_{R'}) [(\uu^{R} \cdot \Grad) \BB^{R}]}{\BB^{R} - \BB^{R'}}} \leq \frac{c}{R^{\eps}} \norm{\uu^{R}}_{H^{s}} \norm{\BB^{R}}_{H^{s}} \norm{\BB^{R} - \BB^{R'}}_{L^{2}}
\]
provided $0 < \eps < s-1$; the other three corresponding terms are handled in the same way.

The most difficult term is \eqref{eqn:uBB2}, which requires more care: in particular it requires different treatments in two and three dimensions: in 2D, we use
\[
\norm{fg}_{L^{2}} \leq \norm{f}_{L^{2/\eps}} \norm{g}_{L^{2/(1+\eps)}} \leq c \norm{f}_{H^{1-\eps}} \norm{g}_{H^{\eps}} \leq c \norm{f}_{H^{1}} \norm{g}_{H^{s-1}},
\]
while in 3D we use
\[
\norm{fg}_{L^{2}} \leq \norm{f}_{L^{6}} \norm{g}_{L^{3}} \leq c \norm{f}_{H^{1}} \norm{g}_{H^{1/2}} \leq c \norm{f}_{H^{1}} \norm{g}_{H^{s-1}}.
\]
In either case, we obtain
\begin{align*}
&\abs{\inner{\Ss_{R'}[((\uu^{R} - \uu^{R'}) \cdot \Grad) \BB^{R}]}{\BB^{R} - \BB^{R'}}} \\
&\qquad \leq c \norm{\uu^{R} - \uu^{R'}}_{H^{1}} \norm{\Grad \BB^{R}}_{H^{s-1}} \norm{\Ss_{R'}[\BB^{R} - \BB^{R'}]}_{L^{2}}.
\end{align*}
We now use the inequality $ab \leq \frac{1}{\nu} a^{2} + \frac{\nu}{4} b^{2}$, yielding
\[
\eqref{eqn:uBB2} \leq \frac{\nu}{4} \norm{\uu^{R} - \uu^{R'}}_{H^{1}}^{2}  + \frac{c}{\nu} \norm{\Grad \BB^{R}}_{H^{s-1}}^{2} \norm{\Ss_{R'}[\BB^{R} - \BB^{R'}]}_{L^{2}}^{2}.
\]


All other terms can be estimated similarly, with two terms from \eqref{eqn:BBu} and \eqref{eqn:BuB} adding to zero. Putting all the terms together we obtain
\begin{align*}
&\frac{\rd}{\rd t} \left( \norm{\uu^{R} - \uu^{R'}}_{L^{2}}^{2} + \norm{\BB^{R} - \BB^{R'}}_{L^{2}}^{2} \right) + \nu \norm{\Grad(\uu^{R} - \uu^{R'})}_{L^{2}}^{2} \\
&\qquad \leq \frac{1}{R^{\eps}} \left( \norm{\uu^{R}}_{H^{s}}^{2} + \norm{\BB^{R}}_{H^{s}}^{2} \right) \left( \norm{\uu^{R} - \uu^{R'}}_{L^{2}} + \norm{\BB^{R} - \BB^{R'}}_{L^{2}} \right) \\
&\qquad \qquad + c \left( \norm{\Grad \uu^{R}}_{H^{s}} + \frac{1}{\nu} \norm{\BB_{R}}_{H^{s}}^{2} \right) \left( \norm{\uu^{R} - \uu^{R'}}_{L^{2}}^{2} + \norm{\BB^{R} - \BB^{R'}}_{L^{2}}^{2} \right).
\end{align*}
Setting $Y(t) = \norm{\uu^{R} - \uu^{R'}}_{L^{2}} + \norm{\BB^{R} - \BB^{R'}}_{L^{2}}$, and using the bound
\[
\sup_{t \in [0,T_{*}]} \norm{\uu^{R}(t)}_{H^{s}}\text{, }\sup_{t \in [0,T_{*}]} \norm{\BB^{R}(t)}_{H^{s}}\text{, } \int_{0}^{T_{*}} \norm{\Grad \uu^{R}(t)}_{H^{s}}^{2} \, \rd t \leq M
\]
for all $t \in [0, T_{*}]$, we see that
\[
\frac{\rd Y}{\rd t} \leq \frac{M}{R^{\eps}} + c Y \left( \frac{M}{\nu} + \norm{\Grad \uu^{R}}_{H^{s}} \right).
\]
As $\norm{\Grad \uu^{R}}_{H^{s}}$ is integrable in time, a standard Gronwall argument shows that
\[
\sup_{t \in [0,T_{*}]} Y(t) \leq \frac{C(\nu,M,T_{*})}{R^{\eps}},
\]
and the right-hand side tends to zero as as $R, R' \to \infty$, as required.
\end{proof}

It follows that $(\uu^{R}, \BB^{R}) \to (\uu, \BB)$ strongly in $L^{\infty}(0, T_{*}; L^{2}(\R^{n}))$, and it \linebreak is straightforward to use the last estimate in the proof above to show that \linebreak $\Grad \uu^{R} \to \Grad \uu$  strongly in $L^{2}(0, T_{*}; L^{2}(\R^{n}))$. Combining Propositions~\ref{prop:Uniform} and \linebreak \ref{prop:Cauchy} and using Sobolev interpolation (see \cite{book:AdamsFournier}) yields \linebreak $(\uu^{R}, \BB^{R}) \to (\uu, \BB)$ strongly in $L^{\infty}(0, T_{*}; H^{s'}(\R^{n}))$ for any $s' < s$. Furthermore, \linebreak $\Grad \uu^{R} \to \Grad \uu$ strongly in $L^{2}(0, T_{*}; H^{s'}(\R^{n}))$ for any $s' < s$, and thus $\Laplace \uu^{R} \to \Laplace \uu$ \linebreak strongly in $L^{2}(0, T_{*}; H^{s'-1}(\R^{n}))$. 

To deal with the nonlinear terms, we prove a simple estimate.

\begin{lemma}
\label{lem:NiceEstimate}
Fix $s > n/2$ and let $\vv, \ww \in H^{s}$  with $\Div \vv = 0$. Then
\[
\norm{(\vv \cdot \Grad) \ww}_{H^{s-1}} \leq C \norm{\vv}_{H^{s}} \norm{\ww}_{H^{s}}.
\]
\end{lemma}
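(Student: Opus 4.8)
The key observation is that the divergence-free hypothesis on $\vv$ lets us rewrite the advection term in divergence form, so that the single derivative appearing in $(\vv \cdot \Grad)\ww$ can be moved \emph{outside} the product. Concretely, since $\Div \vv = 0$ we have, componentwise,
\[
\left[ (\vv \cdot \Grad) \ww \right]_{k} = \sum_{j=1}^{n} \vv_{j} \pd_{j} \ww_{k} = \sum_{j=1}^{n} \pd_{j} (\vv_{j} \ww_{k}) = \left[ \Div (\vv \otimes \ww) \right]_{k},
\]
where $(\vv \otimes \ww)_{jk} = \vv_{j} \ww_{k}$. Thus $(\vv \cdot \Grad)\ww = \Div(\vv \otimes \ww)$.

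\textbf{Key steps.} First I would record that each $\pd_{j}$ maps $H^{s} \to H^{s-1}$ boundedly: on the Fourier side $\widehat{\pd_{j} f}(\xi) = 2\pi \ii \xi_{j} \hat{f}(\xi)$, and $|\xi_{j}|^{2}(1+|\xi|^{2})^{s-1} \leq (1+|\xi|^{2})^{s}$, so $\norm{\pd_{j} f}_{H^{s-1}} \leq 2\pi \norm{f}_{H^{s}}$. Applying this to the divergence-form expression gives
\[
\norm{(\vv \cdot \Grad)\ww}_{H^{s-1}} = \norm{\Div(\vv \otimes \ww)}_{H^{s-1}} \leq C \norm{\vv \otimes \ww}_{H^{s}} \leq C \sum_{j,k} \norm{\vv_{j} \ww_{k}}_{H^{s}}.
\]
Second, since $s > n/2$ the space $H^{s}(\R^{n})$ is an algebra, so $\norm{\vv_{j} \ww_{k}}_{H^{s}} \leq C \norm{\vv_{j}}_{H^{s}} \norm{\ww_{k}}_{H^{s}} \leq C \norm{\vv}_{H^{s}} \norm{\ww}_{H^{s}}$. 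Combining these two bounds yields the claimed estimate.

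\textbf{The main obstacle (and why the trick matters).} There is no serious analytic difficulty here; the one point requiring care is \emph{where} the derivative is placed. If one tried to estimate $(\vv \cdot \Grad)\ww$ directly, one would face a product of $\vv \in H^{s}$ with $\Grad \ww \in H^{s-1}$ and would need a product inequality valid at regularity $s-1$. But $H^{s-1}$ need not be an algebra, since $s-1$ may fail to exceed $n/2$ (indeed $s > n/2$ only guarantees $s-1 > n/2 - 1$), so that route is not available using only the algebra property invoked elsewhere in the paper. Exploiting $\Div \vv = 0$ to recast the term as $\Div(\vv \otimes \ww)$ sidesteps this entirely: the product is taken at the favourable regularity $s$, and the unavoidable loss of one derivative is absorbed harmlessly by the outer $\Div$. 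This is the only place the divergence-free hypothesis is genuinely needed.
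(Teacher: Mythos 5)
Your proof is correct and is essentially identical to the paper's: the authors likewise use $\Div \vv = 0$ to write $(\vv \cdot \Grad)\ww = \Div(\vv \otimes \ww)$, bound the outer divergence as a map $H^{s} \to H^{s-1}$, and apply the algebra property of $H^{s}$ for $s > n/2$. Your additional commentary on why the derivative must be moved outside the product (since $s - 1$ may fail to exceed $n/2$) is accurate and simply makes explicit what the paper leaves implicit.
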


\begin{proof}
As $\vv$ is divergence-free, $(\vv \cdot \Grad) \ww = \Div (\vv \otimes \ww)$. As $H^{s}$ is an algebra,
\[
\norm{(\vv \cdot \Grad) \ww}_{H^{s-1}} = \norm{\Div (\vv \otimes \ww)}_{H^{s-1}} \leq C \norm{\vv \otimes \ww}_{H^{s}} \leq C \norm{\vv}_{H^{s}} \norm{\ww}_{H^{s}}. \qedhere
\]
\end{proof}

For $s' > n/2$, by Lemma~\ref{lem:NiceEstimate},
\[
\sup_{t \in [0,T_{*}]} \norm{\Ss_{R} [(\uu^{R} \cdot \Grad) \BB^{R}] - (\uu \cdot \Grad)\BB}_{H^{s'-1}} \to 0
\]
as $R \to \infty$. It remains to show convergence of the time derivatives: using Lemma~\ref{lem:NiceEstimate} once more, we obtain
\[
\bignorm{\frac{\pd \uu^{R}}{\pd t}}_{H^{s-1}} + \bignorm{\frac{\pd \BB^{R}}{\pd t}}_{H^{s-1}} \leq C \norm{\Laplace \uu^{R}}_{H^{s-1}} + C \left( \norm{\uu^{R}}_{H^{s}} + \norm{\BB^{R}}_{H^{s}} \right)^{2}.
\]
Thus, using this and Proposition~\ref{prop:Uniform}, we can extract a subsequence $R_{m} \to +\infty$ such that
\[
\frac{\pd \uu^{R_{m}}}{\pd t} \weakstarto \frac{\pd \uu}{\pd t}, \frac{\pd \BB^{R_{m}}}{\pd t} \weakstarto \frac{\pd \BB}{\pd t} \text{ in } L^{2}(0, T_{*}; H^{s-1}(\R^{n})).
\]
Using the above strong convergence allows us to conclude that the time derivatives will converge strongly in $L^{2}(0, T_{*}; H^{s'-1}(\R^{n}))$ as well, and hence $(\uu, \BB)$ solves \eqref{eqn:MHD} as an equality in $L^{2}(0, T_{*}; H^{s'-1}(\R^{n}))$. Finally, the uniform bounds in Proposition~\ref{prop:Uniform} guarantee the existence of a subsequence (which we relabel) such that
\begin{align*}
\uu^{R_{m}} \weakstarto \uu, \BB^{R_{m}} \weakstarto \BB &\text{ in } L^{\infty}(0, T_{*}; H^{s}(\R^{n})), \\
\Grad \uu^{R_{m}} \weakstarto \Grad \uu &\text{ in }L^{2}(0, T_{*}; H^{s}(\R^{n}))
\end{align*}
(by the Banach--Alaoglu theorem), which guarantees that the limit satisfies
\[
\uu \in L^{\infty}(0, T_{*}; H^{s}(\R^{n})) \cap L^{2}(0, T_{*}; H^{s+1}(\R^{n})), \qquad \BB \in L^{\infty}(0, T_{*}; H^{s}(\R^{n})).
\]

As $\uu \in L^{2}(0, T_{*}; H^{s+1}(\R^{n}))$ and $\frac{\pd \uu}{\pd t} \in L^{2}(0, T_{*}; H^{s-1}(\R^{n}))$, by standard results (see, e.g., \cite{book:Evans}, \S5.9, Theorem 4), $\uu \in C([0, T_{*}]; H^{s}(\R^{n}))$. However, a further argument is needed to show that $\BB \in C([0, T_{*}]; H^{s}(\R^{n}))$: we proceed as in Theorem 3.5 (pp109--111) in \cite{book:MajdaBertozzi}, without going into the details, using the argument used for the Euler equations. It is easy to show, using the bounds in Proposition~\ref{prop:Uniform}, that $\BB \in C_{\mathrm{W}}([0, T_{*}]; H^{s}(\R^{n}))$; that is, $\BB$ is continuous in the weak topology of $H^{s}$. It thus suffices to show that $\norm{\BB(\cdot)}_{H^{s}}$ is continuous as a function of time. For fixed $\uu$ such that $\Grad \uu \in L^{2}(0, T_{*}; H^{s}(\R^{n}))$, proceeding analogously to Proposition~\ref{prop:Cauchy}, we obtain
\[                                                                                                                                                                                                                                                                                                                                                                 
\frac{1}{2} \frac{\rd}{\rd t} \norm{\BB(t)}_{H^{s}} \leq c \norm{\Grad \uu}_{H^{s}} \norm{\BB}_{H^{s}}^{2},
\]
and Gronwall's inequality shows that
\[
\norm{\BB(t)}_{H^{s}} \leq \norm{\BB_{0}}_{H^{s}} \exp \left( \int_{0}^{t} \norm{\Grad \uu(\tau)}_{H^{s}} \, \rd \tau \right),
\]
and hence $\norm{\BB(\cdot)}_{H^{s}}$ is continuous from the right at time $t=0$; applying this bound to the equation started at an arbitrary time $\tau \in [0, T_{*}]$ shows that $\norm{\BB(\cdot)}_{H^{s}}$ is continuous from the right at time $t=\tau$. But the $\BB$ equation is time-reversible, so $\norm{\BB(\cdot)}_{H^{s}}$ is continuous from the left at time $t=\tau$, and as $\tau$ was arbitrary $\norm{\BB(\cdot)}_{H^{s}}$ is continuous. This, combined with the fact that $\BB \in C_{\mathrm{W}}([0, T_{*}]; H^{s}(\R^{n}))$, yields that $\BB \in C([0, T_{*}]; H^{s}(\R^{n}))$).

The proof of uniqueness is very similar to the proof of Proposition~\ref{prop:Cauchy}, and we omit it. This completes the proof of Theorem~\ref{thm:MHDLocalExistence}.

\section{Local existence for a reduced model}
\label{sec:LocalExistenceCEP}

Consider now the equations
\begin{subequations}
\label{eqn:StokesMHD}
\begin{align}
- \nu \Laplace \uu + \Grad p_{*} &= (\BB \cdot \Grad) \BB, \label{eqn:StokesMHD-u} \\
\frac{\pd \BB}{\pd t} + (\uu \cdot \Grad) \BB &= (\BB \cdot \Grad) \uu, \label{eqn:StokesMHD-B} \\
\Div \uu = \Div \BB &= 0
\end{align}
\end{subequations}
on the whole of $\R^{n}$, with divergence-free initial data $\BB_{0} \in H^{s}(\R^{n})$, for $s > n/2$.

Our interest in this reduced model stems from the method of magnetic relaxation described in the introduction. If all we are interested in is the limiting state, the dynamical model used to obtain that steady state is not particularly important: in a talk given at the University of Warwick, \cite{misc:talkMoffatt2009} argued that dropping the acceleration terms from the $\uu$ equation and working with a ``Stokes'' model --- such as equations~\eqref{eqn:StokesMHD} --- might prove more mathematically amenable.

In \cite{art:ARMA}, global existence and uniqueness of solutions in 2D, and local existence of solutions in 3D, was established for a variant of \eqref{eqn:StokesMHD} with a diffusion term $-\eta \Laplace \BB$ in the second equation. In this section, we establish local existence and uniqueness of solutions for \eqref{eqn:StokesMHD} (without magnetic diffusion) in $H^{s}$ for $s > n/2$.

\begin{thm}
\label{thm:StokesMHDLocalExistence}
For $s > n/2$, and initial data $\BB_{0} \in H^{s}(\R^{n})$ with $\Div \BB_{0} = 0$, there exists a time $T_{*} = T_{*}(s, \norm{\BB_{0}}_{H^{s}}) > 0$ such that the equations \eqref{eqn:StokesMHD} have a unique solution $(\uu, \BB)$, such that $\BB \in C([0, T_{*}]; H^{s}(\R^{n}))$ and $\uu \in C([0, T_{*}]; H^{s+1}(\R^{n}))$.
\end{thm}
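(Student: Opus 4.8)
The plan is to exploit the fact that, in the reduced model \eqref{eqn:StokesMHD}, the velocity $\uu$ is no longer an independent dynamical variable but is instead \emph{slaved} to $\BB$ through a stationary Stokes problem. Applying the Leray projection $\mathbb{P}$ onto divergence-free fields to \eqref{eqn:StokesMHD-u} removes the pressure and gives $-\nu\Laplace\uu = \mathbb{P}[(\BB\cdot\Grad)\BB]$, so that
\[
\uu = \frac{1}{\nu}(-\Laplace)^{-1}\mathbb{P}[(\BB\cdot\Grad)\BB] =: \mathcal{T}[\BB].
\]
Writing $(\BB\cdot\Grad)\BB = \Div(\BB\otimes\BB)$, the operator $(-\Laplace)^{-1}\mathbb{P}\,\Div$ is a Fourier multiplier of order $-1$, and since $H^s$ is an algebra for $s>n/2$ we obtain at once the elliptic gain
\[
\norm{\uu}_{H^{s+1}} = \norm{\mathcal{T}[\BB]}_{H^{s+1}} \leq \frac{c}{\nu}\norm{\BB\otimes\BB}_{H^s} \leq \frac{c}{\nu}\norm{\BB}_{H^s}^2.
\]
In particular $\Grad\uu\in H^s$ with $\norm{\Grad\uu}_{H^s}\leq c\nu^{-1}\norm{\BB}_{H^s}^2$, so Corollary~\ref{cor:Commutator} is applicable. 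Substituting $\uu=\mathcal{T}[\BB]$ into \eqref{eqn:StokesMHD-B} yields a single closed quasilinear evolution equation for $\BB$, and the proof then follows the template of Theorem~\ref{thm:MHDLocalExistence}.

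First I would set up a Fourier-truncated system mirroring \eqref{eqn:MHD-Mollified}, keeping $\BB^R\in V_R$ by solving
\[
\frac{\pd\BB^R}{\pd t} = \Ss_R[(\BB^R\cdot\Grad)\uu^R] - \Ss_R[(\uu^R\cdot\Grad)\BB^R], \qquad \uu^R := \mathcal{T}[\BB^R],
\]
with initial data $\Ss_R\BB_0$. Since $\mathcal{T}$ is a bounded multiplier composed with the locally Lipschitz map $\BB\mapsto\BB\otimes\BB$, the right-hand side is Lipschitz on $V_R$ and Picard's theorem gives a local solution. For the uniform bounds I would apply $\Lambda^s$, pair with $\Lambda^s\BB^R$, and control the transport term by Corollary~\ref{cor:Commutator} and the stretching term $(\BB^R\cdot\Grad)\uu^R$ by the algebra property, in both cases using the elliptic gain to replace $\norm{\Grad\uu^R}_{H^s}$ by $c\nu^{-1}\norm{\BB^R}_{H^s}^2$. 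This produces the Riccati inequality
\[
\frac{\rd}{\rd t}\norm{\BB^R}_{H^s}^2 \leq \frac{c}{\nu}\norm{\BB^R}_{H^s}^4,
\]
which yields a bound on $\norm{\BB^R}_{H^s}$ uniform in $R$ on a time interval $[0,T_*]$ with $T_*$ depending only on $s$ and $\norm{\BB_0}_{H^s}$, exactly as in Proposition~\ref{prop:Uniform}.

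The main obstacle is the Cauchy estimate in $L^\infty(0,T_*;L^2)$, the analogue of Proposition~\ref{prop:Cauchy}. There, the dangerous difference term of type \eqref{eqn:uBB2} was tamed by absorbing a factor $\norm{\uu^R-\uu^{R'}}_{H^1}$ into the viscous dissipation $\nu\norm{\Grad(\uu^R-\uu^{R'})}_{L^2}^2$; here there is no such dissipation available, because $\uu$ no longer satisfies an evolution equation. The remedy is that the elliptic solver is itself Lipschitz from $L^2$ differences of $\BB$ into $H^1$ differences of $\uu$: writing $\BB^R\otimes\BB^R-\BB^{R'}\otimes\BB^{R'} = \BB^R\otimes(\BB^R-\BB^{R'}) + (\BB^R-\BB^{R'})\otimes\BB^{R'}$ and using $H^s\hookrightarrow L^\infty$,
\[
\norm{\uu^R-\uu^{R'}}_{H^1} \leq \frac{c}{\nu}\big(\norm{\BB^R}_{H^s}+\norm{\BB^{R'}}_{H^s}\big)\norm{\BB^R-\BB^{R'}}_{L^2}.
\]
Feeding this bound, together with the same two- and three-dimensional product estimates used in Proposition~\ref{prop:Cauchy}, into the difference estimate converts every occurrence of $\norm{\uu^R-\uu^{R'}}_{H^1}$ directly into $\norm{\BB^R-\BB^{R'}}_{L^2}$ (times uniformly bounded quantities), so no absorption into a dissipation term is needed; a Gronwall argument then closes the estimate and shows $\BB^R$ is Cauchy, with a rate $O(R^{-\eps})$ coming from the cutoff terms via \eqref{eqn:MollifierProp2}.

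Finally, interpolating between the uniform $H^s$ bound and the $L^2$ convergence gives $\BB^R\to\BB$ in $L^\infty(0,T_*;H^{s'})$ for every $s'<s$; the continuity of $\mathcal{T}\colon H^{s'}\to H^{s'+1}$ then gives $\uu^R=\mathcal{T}[\BB^R]\to\mathcal{T}[\BB]=:\uu$, and passing to the limit (using Lemma~\ref{lem:NiceEstimate} for the nonlinear terms) shows that $(\uu,\BB)$ solves \eqref{eqn:StokesMHD}. The time-continuity $\BB\in C([0,T_*];H^s)$ follows from the weak-continuity-plus-norm-continuity argument of Theorem~\ref{thm:MHDLocalExistence}, the $\BB$ equation again being time-reversible for fixed $\uu$; and since $\BB\mapsto\BB\otimes\BB$ maps $C([0,T_*];H^s)$ into itself by the algebra property while $\frac{1}{\nu}(-\Laplace)^{-1}\mathbb{P}\,\Div$ is bounded $H^s\to H^{s+1}$, we conclude $\uu=\mathcal{T}[\BB]\in C([0,T_*];H^{s+1})$. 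Uniqueness follows from a difference estimate identical in structure to the Cauchy argument.
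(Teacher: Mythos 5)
Your proposal is correct and follows essentially the same route as the paper: a Fourier-truncated system in which $\uu^{R}$ is slaved to $\BB^{R}$ through the Stokes solve (the paper's estimate \eqref{eqn:EllipticEstimate} is exactly your Lipschitz property of $\mathcal{T}$ on $V_{R}$), Picard's theorem, uniform $H^{s}$ bounds via Corollary~\ref{cor:Commutator} and the algebra property, an $L^{2}$ Cauchy argument, interpolation, and the same weak-continuity-plus-norm-continuity and time-reversibility argument for $\BB \in C([0,T_{*}];H^{s})$, with $\uu \in C([0,T_{*}];H^{s+1})$ recovered elliptically. The one place you genuinely deviate is a step the paper omits: for the uniform bound the paper keeps the dissipation $\nu\norm{\Grad \uu^{R}}_{H^{s}}^{2}$ on the left-hand side (obtained by testing the Stokes equation itself) and closes with Young's inequality, which also yields the uniform bound on $\int_{0}^{T_{*}}\norm{\Grad\uu^{R}}_{H^{s}}^{2}\,\rd t$ used for the strong $L^{2}(0,T_{*};L^{2})$ convergence of $\Grad\uu^{R}$; and for the Cauchy step the paper appeals to an ``almost identical argument to Proposition~\ref{prop:Cauchy}'', i.e.\ absorbing $\norm{\uu^{R}-\uu^{R'}}_{H^{1}}$ into the Stokes dissipation of the difference equation. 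You instead discard the dissipation (your Riccati inequality gives the same $T_{*}$, and $L^{\infty}(0,T_{*};H^{s+1})$ for $\uu^{R}$ is anyway stronger) and replace the absorption by the $L^{2}$-level Lipschitz estimate $\norm{\uu^{R}-\uu^{R'}}_{H^{1}} \leq c\nu^{-1}(\norm{\BB^{R}}_{H^{s}}+\norm{\BB^{R'}}_{H^{s}})\norm{\BB^{R}-\BB^{R'}}_{L^{2}}$, which converts every velocity difference into a magnetic one and closes by Gronwall alone --- a cleaner mechanism that makes the omitted step explicit. The only caveat is that the $L^{2}$ component of that $H^{1}$ difference norm (and of your bound $\norm{\uu}_{H^{s+1}} \leq c\nu^{-1}\norm{\BB}_{H^{s}}^{2}$) relies on inhomogeneous boundedness of $(-\Laplace)^{-1}\mathbb{P}\,\Div$ at low frequencies on $\R^{n}$, but this is exactly the same elliptic regularity assertion the paper itself invokes in \eqref{eqn:EllipticEstimate}, so it is not a gap relative to the paper's own proof.
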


In this case, we consider the truncated equations:
\begin{subequations}
\label{eqn:StokesMHD-Mollified}
\begin{align}
- \nu \Laplace \uu^{R} +  \Grad p_{*}^{R} &= (\BB^{R} \cdot \Grad) \BB^{R}, \label{eqn:StokesMHD-Mollified-u} \\
\frac{\pd \BB^{R}}{\pd t} &= \Ss_{R}[(\BB^{R} \cdot \Grad) \uu^{R}] - \Ss_{R}[(\uu^{R} \cdot \Grad) \BB^{R}], \label{eqn:StokesMHD-Mollified-B} \\
\Div \uu^{R} &= \Div \BB^{R} = 0,
\end{align}
\end{subequations}
with initial data $\BB_{0} \in H^{s}(\R^{n})$. Using standard elliptic regularity results in conjunction with Lemma~\ref{lem:NiceEstimate}, we see that
\begin{align}
\norm{\uu^{R} - \uu^{R'}}_{H^{s+1}} &\leq \frac{1}{\nu} \norm{(\BB^{R} \cdot \Grad) \BB^{R} - (\BB^{R'} \cdot \Grad) \BB^{R'}}_{H^{s-1}} \notag \\
&\leq \frac{1}{\nu} \left( \norm{\BB^{R} - \BB^{R'}}_{H^{s}} \norm{\BB^{R}}_{H^{s}} + \norm{\BB^{R'}}_{H^{s}} \norm{\BB^{R} - \BB^{R'}}_{H^{s}} \right), \label{eqn:EllipticEstimate}
\end{align}
so on $V_{R}$, $\uu^{R}$ is a Lipschitz function of $\BB^{R}$. Thus, as before, the second equation (for $\BB$) is a Lipschitz ODE on the space $V_{R}$, and by Picard's theorem has a solution for as long as $\norm{\BB^{R}}_{H^{s}}$ remains finite.

By the same techniques as Proposition~\ref{prop:Uniform}, we obtain the uniform bound
\[
\frac{1}{2} \frac{\rd}{\rd t} \norm{\BB^{R}}_{H^{s}}^{2} + \nu \norm{\Grad \uu^{R}}_{H^{s}}^{2} \leq c \norm{\Grad \uu^{R}}_{H^{s}} \norm{\BB^{R}}_{H^{s}}^{2},
\]
and a Gronwall argument again shows there is some short time $T_{*}$ such that $\BB^{R}$ are uniformly bounded in $L^{\infty}(0, T_{*}; H^{s}(\R^{n}))$. Furthermore, using Lemma~\ref{lem:NiceEstimate}, we obtain
\[
\norm{\uu}_{H^{s+1}} \leq \norm{(\BB \cdot \Grad) \BB}_{H^{s-1}} \leq \norm{\BB}_{H^{s}}^{2},
\]
so $\uu^{R}$ are uniformly bounded in $L^{\infty}(0, T_{*}; H^{s+1}(\R^{n}))$.

An almost identical argument to Proposition~\ref{prop:Cauchy} --- which we omit here --- shows that $\BB^{R} \to \BB$ strongly in $L^{\infty}(0, T_{*}; L^{2}(\R^{n}))$ and $\Grad \uu^{R} \to \Grad \uu$ strongly in $L^{2}(0, T_{*}; L^{2}(\R^{n}))$. Interpolation thus yields that, for any $s' < s$, $\BB^{R} \to \BB$ strongly in $L^{\infty}(0, T_{*}; H^{s'}(\R^{n}))$, and $\uu^{R} \to \uu$ strongly in $L^{\infty}(0, T_{*}; H^{s'+1}(\R^{n}))$. Hence $\Laplace \uu^{R} \to \Laplace \uu$ strongly in $L^{\infty}(0, T_{*}; H^{s'-1}(\R^{n}))$.

Convergence of the nonlinear terms is handled in the same way as the previous case, and thus $(\uu, \BB)$ solves \eqref{eqn:StokesMHD} as an equality in $H^{s'-1}$. Again, the Banach--Alaoglu theorem guarantees that the limit $\uu \in L^{\infty}(0, T_{*}; H^{s+1}(\R^{n}))$ and $\BB \in L^{\infty}(0, T_{*}; H^{s}(\R^{n}))$. Exactly the same argument as the previous case applies to show that in fact $\BB \in C([0, T_{*}]; H^{s}(\R^{n}))$; thence, an argument analogous to \eqref{eqn:EllipticEstimate} for $\uu(t_{1}) - \uu(t_{2})$ shows that $\uu \in C([0, T_{*}]; H^{s+1}(\R^{n}))$.

Finally, uniqueness is handled similarly to the previous case, and thus the proof of Theorem~\ref{thm:StokesMHDLocalExistence} is complete.

\section{Conclusion}

In Theorem~\ref{thm:MHDLocalExistence}, we established local existence and uniqueness of solutions to \eqref{eqn:MHD} in $H^{s}$ for $s > n/2$. A natural question to ask is whether this can be generalised to $H^{n/2}$: the counterexample to inequality \eqref{eqn:Commutator} outlined in Appendix~\ref{app:Counterexample} shows that the same approach will not work. It may prove fruitful to consider local existence in Besov or Triebel--Lizorkin spaces with the same scaling as $H^{n/2}$, which we hope to examine in a future paper (cf. the result of \cite{art:Chae2003} for the Euler equations in critical Triebel--Lizorkin spaces).

\appendix
\section{A counterexample to Theorem~\ref{thm:Commutator} in $H^{1}(\R^{2})$}
\label{app:Counterexample}

In this appendix, we show that Theorem~\ref{thm:Commutator} cannot hold for $s = n/2$, at least in two dimensions (when $s = 1$). More precisely, we show that the inequality 
\begin{equation}
\norm{\pd_{k} [ (\uu \cdot \Grad) \BB] - (\uu \cdot \Grad) (\pd_{k} \BB)}_{L^{2}} \leq c \norm{\Grad \uu}_{H^{1}} \norm{\BB}_{H^{1}} \tag{\ref{eqn:False2DIneq}}
\end{equation}
cannot hold in dimension $2$, by exhibiting a pair of divergence-free functions $\uu$ and $\BB$ for which the right-hand side is finite, but the left-hand side is infinite.

Since we have one full derivative, we can make an important simplification by means of the product rule: the inequality reduces to
\begin{equation}
\label{eqn:WantCounterexample}
\norm{((\pd_{k} \uu) \cdot \Grad) \BB}_{L^{2}} \leq c \norm{\Grad \uu}_{H^{1}} \norm{\BB}_{H^{1}}.
\end{equation}
Now, Theorem~\ref{thm:Commutator} does not require $\uu$ and $\BB$ to be divergence-free, and there is an easier counterexample to \eqref{eqn:WantCounterexample} if we drop the divergence-free requirement. However, in order to eliminate the possibility that \eqref{eqn:WantCounterexample} might hold for divergence-free vector fields, even if it does not hold in general, we present here a counterexample in which $\uu$ and $\BB$ are divergence-free.

Since we are in two dimensions, we may represent our divergence-free vector fields as $\uu = \Grad^{\perp} \phi$ and $\BB = \Grad^{\perp} \psi$ for some scalar functions $\phi$ and $\psi$; in other words,
\[
\uu = (\pd_{2} \phi, -\pd_{1} \phi), \qquad \BB = (\pd_{2} \psi, -\pd_{1} \psi).
\]
Thus
\[
((\pd_{k} \uu) \cdot \Grad) \BB_{1} = (\pd_{k} \uu_{1}) (\pd_{1} \BB_{1}) + (\pd_{k} \uu_{2}) (\pd_{2} \BB_{1})
\]
becomes
\[
((\pd_{k} \uu) \cdot \Grad) \BB_{1} = (\pd_{k} \pd_{2} \phi) (\pd_{1} \pd_{2} \psi) - (\pd_{k} \pd_{1} \phi) (\pd_{2}^{2} \psi)
\]
(one can treat the second component similarly). Taking Fourier transforms of both sides yields
\begin{equation}
\label{eqn:FTtoBound}
\mathscr{F}[((\pd_{k} \uu) \cdot \Grad) \BB_{1}](\xi) = 16 \pi^{4} \int \underbrace{\zeta_{k} (\xi - \zeta)_{2} [ \zeta^{\perp} \cdot (\xi - \zeta) ]}_{(*)} \hat{\phi}(\zeta) \hat{\psi}(\xi - \zeta) \, \rd \zeta,
\end{equation}
By choosing the support of $\hat{\phi}$ and $\hat{\psi}$ to lie in certain small sectors, we may bound the expression $(*)$ below by the absolute values of the respective components; that is,
\[
\zeta_{k} (\xi - \zeta)_{2} [ \zeta^{\perp} \cdot (\xi - \zeta) ] \geq M_{\delta} |\zeta|^{2} |\xi - \zeta|^{2}.
\]
This is made precise in the following lemma. (The proof thereof is largely elementary, using the bound $\sin x \geq 1 - \frac{2}{\pi} |x - \frac{\pi}{2}|$ for $x \in (0,\pi)$, and we omit the details.)

\begin{lemma}
\label{lem:LowerBound}
Fix $0 < \delta < \frac{1}{\sqrt{2}}$. Suppose that $\zeta, \eta \in \R^{2}$ satisfy $|\arg \zeta - \frac{\pi}{4}| < \delta$, $|\arg \eta - \frac{3\pi}{4}| < \delta$. Then
\[
\frac{\zeta_{k}}{|\zeta|} \frac{\eta_{2}}{|\eta|} \frac{[ \zeta^{\perp} \cdot \eta ]}{|\zeta| |\eta|} \geq \left( \tfrac{\sqrt{2}}{2} - \delta \right)^{2} \left( 1 - \tfrac{4\delta}{\pi} \right) =: M_{\delta} > 0.
\]
\end{lemma}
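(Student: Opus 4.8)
The plan is to bound each of the three factors on the left-hand side separately from below, using only the angular constraints on $\zeta$ and $\eta$. Writing $\arg \zeta = \alpha$ and $\arg \eta = \beta$, the hypotheses give $|\alpha - \tfrac{\pi}{4}| < \delta$ and $|\beta - \tfrac{3\pi}{4}| < \delta$. The key observation is that each of the three quotients is a pure function of the angles $\alpha$ and $\beta$ (the magnitudes $|\zeta|$ and $|\eta|$ cancel), so the problem reduces to a trigonometric estimate on a small neighbourhood of the reference angles $(\tfrac{\pi}{4}, \tfrac{3\pi}{4})$.

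First I would treat the factor $\zeta_k / |\zeta|$. For $k = 1$ this is $\cos\alpha$ and for $k=2$ it is $\sin\alpha$; in either case, writing the argument in the form $\sin x$ (using $\cos\alpha = \sin(\tfrac{\pi}{2} - \alpha)$ when $k=1$), the relevant angle lies within $\delta$ of $\tfrac{\pi}{4}$, so the suggested bound $\sin x \geq 1 - \tfrac{2}{\pi}|x - \tfrac{\pi}{2}|$ gives a lower bound of at least $\tfrac{\sqrt 2}{2} - \delta$ once one checks that $|x - \tfrac{\pi}{2}|$ stays controlled. Next, the factor $\eta_2 / |\eta| = \sin\beta$, with $\beta$ within $\delta$ of $\tfrac{3\pi}{4}$, is handled identically and yields the same lower bound $\tfrac{\sqrt 2}{2} - \delta$; this accounts for the two factors of $(\tfrac{\sqrt2}{2} - \delta)$ in $M_\delta$. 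Finally, the factor $[\zeta^\perp \cdot \eta]/(|\zeta||\eta|)$ equals $\sin(\beta - \alpha)$, and since $\beta - \alpha$ is within $2\delta$ of $\tfrac{\pi}{2}$, the same sine inequality produces the factor $1 - \tfrac{4\delta}{\pi}$. Multiplying the three lower bounds gives exactly $M_\delta$, and the restriction $\delta < \tfrac{1}{\sqrt 2}$ ensures $\tfrac{\sqrt2}{2} - \delta > 0$, so $M_\delta > 0$.

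The main obstacle, such as it is, lies in carefully reducing each of the three quotients to the canonical form $\sin x$ with the correct argument, and in verifying that the sign of $\zeta_k$ is positive on the whole sector (so that no absolute value is lost) — this is where the choice of reference angle $\tfrac{\pi}{4}$ and the smallness condition $\delta < \tfrac{1}{\sqrt 2}$ matter, since they keep $\zeta$ in the open first quadrant where both components are positive. Once the angular reductions are set up correctly, the estimate $\sin x \geq 1 - \tfrac{2}{\pi}|x - \tfrac{\pi}{2}|$ for $x \in (0,\pi)$ does all the work, and the remaining arithmetic is routine; this is why the paper is justified in omitting the details.
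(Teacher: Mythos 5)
Your overall strategy---reduce each of the three quotients to a function of the angles $\alpha = \arg\zeta$, $\beta = \arg\eta$ alone, bound each factor separately, and identify $[\zeta^{\perp}\cdot\eta]/(|\zeta||\eta|) = \sin(\beta-\alpha)$ with $\beta - \alpha$ within $2\delta$ of $\tfrac{\pi}{2}$---is correct and is surely the intended argument (the paper omits the proof, giving only the hint about the bound $\sin x \geq 1 - \tfrac{2}{\pi}|x - \tfrac{\pi}{2}|$). Your treatment of the third factor is fine: $\delta < \tfrac{1}{\sqrt{2}}$ gives $\beta - \alpha \in (\tfrac{\pi}{2} - 2\delta, \tfrac{\pi}{2} + 2\delta) \subset (0,\pi)$, so the tent bound yields $\sin(\beta - \alpha) \geq 1 - \tfrac{4\delta}{\pi}$, and this is positive since $\tfrac{1}{\sqrt{2}} < \tfrac{\pi}{4}$.

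However, your handling of the first two factors has a genuine gap: the bound $\sin x \geq 1 - \tfrac{2}{\pi}|x - \tfrac{\pi}{2}|$ \emph{cannot} produce the factor $\tfrac{\sqrt{2}}{2} - \delta$. That inequality is a tent function peaking at $1$ at $x = \tfrac{\pi}{2}$; at the reference angles $x = \tfrac{\pi}{4}$ and $x = \tfrac{3\pi}{4}$ it gives only $\tfrac{1}{2}$, so for $|x - \tfrac{\pi}{4}| < \delta$ it yields $\sin x \geq \tfrac{1}{2} - \tfrac{2\delta}{\pi}$, which is strictly smaller than $\tfrac{\sqrt{2}}{2} - \delta$ whenever $\delta < \tfrac{\sqrt{2}-1}{2(1 - 2/\pi)} \approx 0.57$---in particular throughout the small-$\delta$ regime in which the lemma is actually applied in the appendix. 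No amount of "controlling $|x - \tfrac{\pi}{2}|$" repairs this, because the bound is already too weak at the reference angle itself. The correct elementary tool for these two factors is the $1$-Lipschitz property of sine and cosine (mean value theorem): from $|\alpha - \tfrac{\pi}{4}| < \delta$ one gets $\cos\alpha \geq \cos\tfrac{\pi}{4} - \delta$ and $\sin\alpha \geq \sin\tfrac{\pi}{4} - \delta$, hence $\zeta_k/|\zeta| \geq \tfrac{\sqrt{2}}{2} - \delta$ for $k = 1,2$ (and positive, since $\delta < \tfrac{1}{\sqrt{2}}$ keeps $\alpha$ in the open first quadrant, as you correctly observe); likewise $\eta_2/|\eta| = \sin\beta \geq \sin\tfrac{3\pi}{4} - \delta = \tfrac{\sqrt{2}}{2} - \delta$. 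With that substitution your argument multiplies out to exactly $M_\delta$; the tent bound in the paper's hint is needed only for the factor $1 - \tfrac{4\delta}{\pi}$.
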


\begin{subequations}
\label{eqn:phipsi}
\begin{align}
\hat{\phi} (\zeta) &= \frac{1}{|\zeta|^{2} (1 + |\zeta|^{2})^{1/2}} g(|\zeta|) h_{1}(\arg \zeta), \label{eqn:phi} \\
\hat{\psi} (\eta) &= \frac{1}{|\eta| (1 + |\eta|^{2})^{1/2}} g(|\eta|) h_{2}(\arg \eta), \label{eqn:psi}
\end{align}
\end{subequations}
where
\[
g(r) = \begin{cases} \frac{1}{r (\log r)^{\alpha}} & \text{for }r > \ee \\ 0 & \text{otherwise,} \end{cases}
\]
$\alpha > 0$ will be chosen later, and
\begin{align*}
h_{1}(\theta) &= \begin{cases} 1 & \text{for }\theta \in [\tfrac{\pi}{4} - \delta, \tfrac{\pi}{4} + \delta] \\ 0 & \text{for }\theta \notin [\tfrac{\pi}{4} - \delta, \tfrac{\pi}{4} + \delta] \end{cases} \intertext{and}
h_{2}(\theta) &= \begin{cases} 1 & \text{for }\theta \in [\tfrac{3\pi}{4} - \delta, \tfrac{3\pi}{4} + \delta] \\ 0 & \text{for }\theta \notin [\tfrac{3\pi}{4} - \delta, \tfrac{3\pi}{4} + \delta]  \end{cases}
\end{align*}
Notice that
\begin{align*}
\norm{\Grad \uu}_{H^{1}}^{2} = \norm{\Grad (\Grad^{\perp} \phi)}_{H^{1}}^{2} &= \norm{(1 + |\zeta|^{2})^{1/2} |\zeta|^{2} \hat{\phi}(\zeta)}_{L^{2}}^{2} = \int |g(|\zeta|) h_{1}(\arg \zeta)|^{2} \, \rd \zeta, \\
\norm{\BB}_{H^{1}}^{2} = \norm{(\Grad^{\perp} \psi)}_{H^{1}}^{2} &= \norm{(1 + |\eta|^{2})^{1/2} |\eta| \hat{\psi}(\eta)}_{L^{2}}^{2} = \int |g(|\eta|) h_{2}(\arg \eta)|^{2} \, \rd \eta,
\end{align*}
and hence
\[
\norm{\Grad \uu}_{H^{1}}^{2} = \norm{\BB}_{H^{1}}^{2} = 2\delta \int_{\ee}^{\infty} \frac{1}{r (\log r)^{2\alpha}} \, \rd r = \frac{2\delta}{1 - 2\alpha} (\log r)^{1-2\alpha} \bigg|_{\ee}^{\infty}
\]
which is finite iff $\alpha > 1/2$.

However, by choosing $\xi$ and $\zeta$ carefully --- which we do in full detail shortly --- we may bound the expression~\eqref{eqn:FTtoBound} below by
\[
\mathscr{F}[((\pd_{k} \uu) \cdot \Grad) \BB_{1}](\xi) \geq c \int_{\Omega} \frac{1}{|\zeta|} g(|\zeta|) g(|\xi - \zeta|) \, \rd \zeta
\]
for some sector $\Omega$ in Fourier space. For small $\zeta$, $g(|\xi - \zeta|) \approx g(|\xi|)$, so
\begin{align*}
\mathscr{F}[((\pd_{k} \uu) \cdot \Grad) \BB_{1}](\xi) &\gtrsim c g(|\xi|) \int_{\Omega} \frac{1}{|\zeta|} g(|\zeta|) \, \rd \zeta \\
&\approx \frac{c}{|\xi| (\log |\xi|)^{\alpha}} \int_{1}^{|\xi|} \frac{1}{|r| (\log r)^{\alpha}} \, \rd r \\
&= \frac{c}{|\xi| (\log |\xi|)^{2\alpha - 1}},
\end{align*}
and the right-hand side is in $L^{2}$ if and only if $\alpha > 3/4$. Hence choosing $1/2 < \alpha < 3/4$ will yield our counterexample.

To make this fully rigorous, we carefully choose at which $\xi$ we evaluate \eqref{eqn:FTtoBound}, to ensure that both $\zeta$ and $\xi - \zeta$ fall into the ranges required in Lemma~\ref{lem:LowerBound}, and thus find a lower bound for \eqref{eqn:FTtoBound}. This is the content of the following lemma.

\begin{lemma}
\label{lem:Sectors}
Let
\begin{align*}
\Xi &:= \{ \xi \in \R^{2} : \arg \xi \in [\tfrac{3\pi}{4} - \tfrac{\delta}{2}, \tfrac{3\pi}{4} + \tfrac{\delta}{2}] \},\\
\Upsilon_{\xi} &:= \{ \zeta \in \R^{2} : |\zeta| < |\xi| \sin \tfrac{\delta}{2} \text{ and } \arg \zeta \in [\tfrac{\pi}{4} - \delta, \tfrac{\pi}{4} + \delta] \}.
\end{align*}
Then
\[
\xi \in \Xi \text{, } \zeta \in \Upsilon_{\xi} \implies \arg (\xi - \zeta) \in [\tfrac{3\pi}{4} - \delta, \tfrac{3\pi}{4} + \delta].
\]
\end{lemma}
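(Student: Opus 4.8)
The plan is to prove Lemma~\ref{lem:Sectors} by a direct geometric estimate on the argument of $\xi - \zeta$. Fix $\xi \in \Xi$ and $\zeta \in \Upsilon_{\xi}$. The key observation is that $\zeta$ is \emph{small} relative to $\xi$: since $|\zeta| < |\xi| \sin \tfrac{\delta}{2}$, subtracting $\zeta$ from $\xi$ can only rotate the vector $\xi$ by a small angle. I would quantify this by bounding the angle $\angle(\xi, \xi - \zeta)$ between $\xi$ and $\xi - \zeta$. The natural tool is the sine of the angle: writing $\beta = \angle(\xi, \xi - \zeta)$, elementary trigonometry (the law of sines applied to the triangle with vertices $0$, $\xi$, and $\xi - \zeta$) gives

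\[
\sin \beta \leq \frac{|\zeta|}{|\xi - \zeta|} \cdot (\text{a factor}),
\]

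but it is cleaner to bound $\sin\beta$ directly. The perpendicular distance from the tip of $\xi - \zeta$ to the line through $\xi$ is at most $|\zeta|$, so $|\xi - \zeta| \sin \beta \leq |\zeta|$, whence $\sin \beta \leq |\zeta| / |\xi - \zeta|$.

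\textbf{Controlling the deflection angle.} The next step is to convert the bound on $|\zeta|$ into a bound on $\beta$. First I would show $|\xi - \zeta|$ is comparable to $|\xi|$: since $|\zeta| < |\xi| \sin\tfrac{\delta}{2} < |\xi|$, the reverse triangle inequality gives $|\xi - \zeta| \geq |\xi|(1 - \sin\tfrac{\delta}{2})$. Combining,
\[
\sin \beta \leq \frac{|\zeta|}{|\xi - \zeta|} \leq \frac{|\xi| \sin\tfrac{\delta}{2}}{|\xi|(1 - \sin\tfrac{\delta}{2})} = \frac{\sin\tfrac{\delta}{2}}{1 - \sin\tfrac{\delta}{2}}.
\]
For $\delta$ small (recall $0 < \delta < \tfrac{1}{\sqrt 2}$) the right-hand side is itself small; I would verify that it forces $\beta \leq \tfrac{\delta}{2}$, possibly after shrinking the admissible range of $\delta$ or sharpening the constant. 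A marginally cleaner route that avoids the denominator is to note the perpendicular distance from $\zeta$'s contribution is exactly $|\zeta| \sin(\angle(\zeta,\xi))$, but since we only need an upper bound the crude estimate above suffices.

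\textbf{Assembling the conclusion.} Finally, I would combine the angular bounds. We have $\arg \xi \in [\tfrac{3\pi}{4} - \tfrac{\delta}{2}, \tfrac{3\pi}{4} + \tfrac{\delta}{2}]$ by hypothesis, and $|\arg(\xi - \zeta) - \arg \xi| = \beta \leq \tfrac{\delta}{2}$ by the preceding step. The triangle inequality for arguments then yields
\[
\left| \arg(\xi - \zeta) - \tfrac{3\pi}{4} \right| \leq \left| \arg(\xi - \zeta) - \arg \xi \right| + \left| \arg \xi - \tfrac{3\pi}{4} \right| \leq \tfrac{\delta}{2} + \tfrac{\delta}{2} = \delta,
\]
which is exactly the claimed inclusion $\arg(\xi - \zeta) \in [\tfrac{3\pi}{4} - \delta, \tfrac{3\pi}{4} + \delta]$.

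\textbf{Main obstacle.} The one delicate point is the passage $\sin\beta \leq \tfrac{\sin(\delta/2)}{1-\sin(\delta/2)} \implies \beta \leq \tfrac{\delta}{2}$; the naive bound loses a factor because of the denominator, so it may not hold for all $\delta$ in the stated range without care. I expect the honest fix is either to use the tighter geometric identity $\sin\beta = \tfrac{|\zeta|\sin\gamma}{|\xi-\zeta|}$ (where $\gamma = \angle(\zeta, \xi-\zeta)$) and exploit that $\gamma$ is itself controlled by the angular separation of the two sectors, or simply to observe that the radial constraint $|\zeta| < |\xi|\sin\tfrac{\delta}{2}$ was \emph{designed} so that $\zeta$ lies inside the disk of radius $|\xi|\sin\tfrac{\delta}{2}$ centred at $\xi$'s scale, whose angular half-width as seen from the origin is precisely $\arcsin(\sin\tfrac{\delta}{2}) = \tfrac{\delta}{2}$. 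This last remark is really the geometric heart of the lemma: the factor $\sin\tfrac{\delta}{2}$ in the definition of $\Upsilon_\xi$ is exactly the radius that subtends a half-angle $\tfrac{\delta}{2}$ at distance $|\xi|$, so the deflection is at most $\tfrac{\delta}{2}$ by construction.
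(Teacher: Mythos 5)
Your proof is correct once you commit to the fix you identify at the end, and it takes a genuinely different route from the paper. The paper argues with sets rather than angles: it writes the target condition as $\zeta \in S_{3} = \xi - S_{2}$, notes that $S_{3}$ is bounded by two rays emanating from $\xi$, discards the ray $\gamma_{2}$ that cannot meet the sector $S_{1}$ containing $\zeta$, and then minimises $|\gamma_{1}(t)|^{2} = r^{2} + t^{2} - 2rt\cos(\delta - s)$ in $t$ to show the critical boundary ray stays at distance at least $|\xi|\sin\tfrac{\delta}{2}$ from the origin (using $s \in [-\tfrac{\delta}{2}, \tfrac{\delta}{2}]$, so $\delta - s \in [\tfrac{\delta}{2}, \tfrac{3\delta}{2}]$ where $\sin$ is increasing). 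Your argument is the dual of this computation: instead of bounding the distance from the origin to the boundary of the translated sector, you bound the deflection angle directly, observing that $\xi - \zeta$ lies in the open disk of radius $|\xi|\sin\tfrac{\delta}{2}$ about $\xi$, which subtends half-angle $\arcsin(\sin\tfrac{\delta}{2}) = \tfrac{\delta}{2}$ at the origin, whence $|\arg(\xi - \zeta) - \arg\xi| \leq \tfrac{\delta}{2}$ and the triangle inequality for arguments finishes. This is cleaner than the paper's proof, and it buys something the paper's does not: you never use the angular constraint $\arg\zeta \in [\tfrac{\pi}{4} - \delta, \tfrac{\pi}{4} + \delta]$, so you actually prove the stronger statement that the radial bound $|\zeta| < |\xi|\sin\tfrac{\delta}{2}$ alone forces $\arg(\xi - \zeta) \in [\tfrac{3\pi}{4} - \delta, \tfrac{3\pi}{4} + \delta]$ (the angular condition on $\zeta$ is needed only elsewhere, for the support of $\hat{\phi}$ and Lemma~\ref{lem:LowerBound}). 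Two small repairs for a final write-up: you correctly flag that the crude bound $\sin\beta \leq |\zeta|/|\xi - \zeta| \leq \sin\tfrac{\delta}{2}/(1 - \sin\tfrac{\delta}{2})$ does not close, so the tangent-cone argument must be the actual proof, not a remark; and your aside identity $\sin\beta = |\zeta|\sin\gamma/|\xi - \zeta|$ pairs the wrong angle with the wrong denominator --- the law of sines in the triangle with vertices $0$, $\xi$, $\xi - \zeta$ gives $\sin\beta = |\zeta|\sin\angle(\zeta, \xi - \zeta)/|\xi|$, or equivalently $\sin\beta = |\zeta|\sin\angle(\zeta,\xi)/|\xi - \zeta|$. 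Finally, to invert the sine you should note $\beta < \tfrac{\pi}{2}$, which follows from $\inner{\xi - \zeta}{\xi} \geq |\xi|^{2} - |\zeta||\xi| > 0$; the disk picture makes this automatic since the origin lies strictly outside the disk.
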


The situation is illustrated in figure~\ref{fig:Sectors}: the light shaded region is $\Xi$, while the darker shaded region is $\Upsilon_{\xi}$. We postpone the proof of the lemma to the end of the appendix.

\begin{figure}[bt]
\centering
\includegraphics{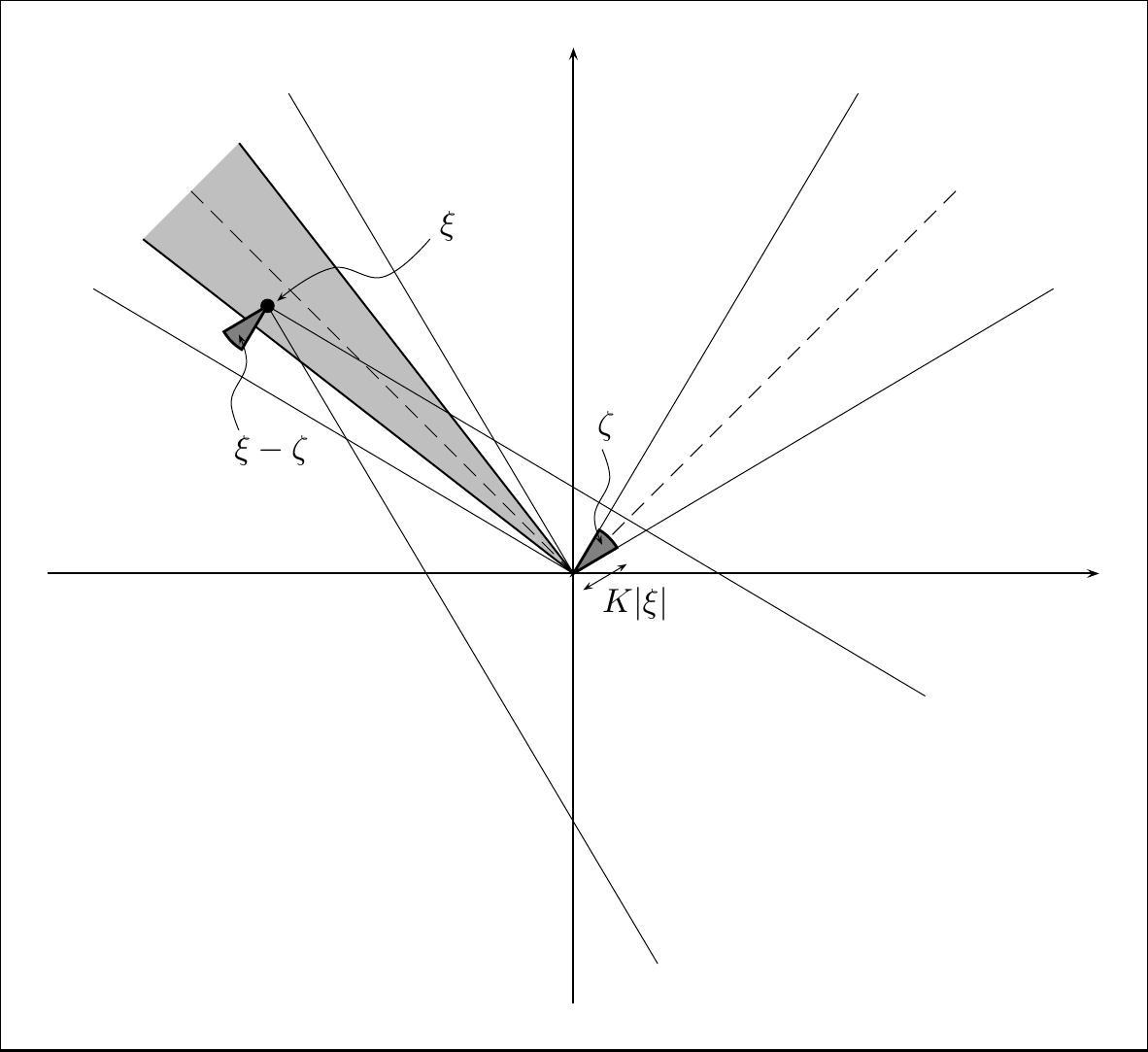}
\caption{A plot showing the sectors (in Fourier space) in which we need $\xi$ and $\zeta$ to lie, where $K = \sin \tfrac{\delta}{2}$.}
\label{fig:Sectors}
\end{figure}

We now restrict the sectors $\Xi$ and $\Upsilon_{\xi}$ to particular radii: setting $K = \sin \tfrac{\delta}{2}$, we let
\begin{align*}
X &:= \{ \xi \in \R^{2} : |\xi| > \ee / K \text{ and } \arg \xi \in [\tfrac{3\pi}{4} - \tfrac{\delta}{2}, \tfrac{3\pi}{4} + \tfrac{\delta}{2}] \} \subset \Xi,\\
Z_{\xi} &:= \{ \zeta \in \R^{2} : \ee < |\zeta| < K |\xi|  \text{ and } \arg \zeta \in [\tfrac{\pi}{4} - \delta, \tfrac{\pi}{4} + \delta] \} \subset \Upsilon_{\xi}.
\end{align*}
By staying away from the origin, we ensure that $|\xi|$ and $(1 + |\xi|^{2})^{1/2}$ are comparable: indeed, note that
\begin{equation}
\label{eqn:Comparable}
\frac{|\xi|}{(1 + |\xi|^{2})^{1/2}} \geq \frac{1}{\sqrt{2}} \qquad \text{for } |\xi| \geq 1.
\end{equation}
Hence, for $\xi \in X$, using Lemmas~\ref{lem:LowerBound} and \ref{lem:Sectors} and estimate \eqref{eqn:Comparable}, equation~\eqref{eqn:FTtoBound} reduces to
\[
\mathscr{F}[((\pd_{k} \uu) \cdot \Grad) \BB_{1}](\xi) \geq c \int_{Z_{\xi}} \frac{1}{|\zeta|} g(|\zeta|) g(|\xi - \zeta|) \, \rd \zeta
\]
for $c = 8\pi^{4}M_{\delta}$.

When $|\zeta| < K |\xi|$, we get $(1 - K)|\xi|  < |\xi - \zeta| < (1 + K) |\xi|$; and as $\delta < \pi/3$, $(1 - K) > K$, ensuring that $g((1-K)|\xi|) > 0$. Thus for $\xi \in X$ and $\zeta \in Z_{\xi}$,
\[
g(|\xi - \zeta|) \geq g((1 + K) |\xi|) > 0.
\]
Thus
\begin{align*}
\mathscr{F}[((\pd_{k} \uu) \cdot \Grad) \BB_{1}](\xi)
&\geq c g((1 + K) |\xi|) \int_{Z_{\xi}} \frac{1}{|\zeta|} g(|\zeta|) \, \rd \zeta \\
&= 2\delta c g((1 + K) |\xi|) \int_{\ee}^{K|\xi|} \frac{1}{r (\log r)^{\alpha}} \, \rd r.
\end{align*}
Since
\[
\int_{\ee}^{K|\xi|} \frac{1}{r (\log r)^{\alpha}} \, \rd r = (\log r)^{1-\alpha} \bigg|_{\ee}^{K|\xi|} = (\log K|\xi|)^{1-\alpha} - 1,
\]
we obtain
\[
\mathscr{F}[((\pd_{k} \uu) \cdot \Grad) \BB_{1}](\xi) \geq 2\delta c \frac{(\log K|\xi|)^{1-\alpha} - 1}{(1 + K) |\xi| (\log ((1 + K) |\xi|))^{\alpha}}
\]
for $\xi \in X$. We want to ensure that the left-hand side is not in $L^{2}$, so it suffices to show that the right-hand side is not square-integrable. Elementary integration yields
\begin{align*}
\norm{\mathscr{F}[((\pd_{k} \uu) \cdot \Grad) \BB_{1}]}_{L^{2}}^{2} &\geq 
c \int_{L}^{\infty} w^{2-4\alpha} \, \rd w,
\end{align*}
where $L \geq \max \{ \log \ee/K, \log (1 + K) \}$ is chosen sufficiently large such that for all $w > L$, $w^{1-\alpha} - 1 \geq \tfrac{1}{2} w^{1-\alpha}$. The last integral is finite if and only if $3 - 4\alpha < 0$, i.e.~iff $\alpha > 3/4$. Hence, choosing $1/2 < \alpha < 3/4$ ensures that $\Grad \uu \in H^{1}$ and $\BB \in H^{1}$, but that $\mathscr{F}[((\pd_{k} \uu) \cdot \Grad) \BB_{1}] \notin L^{2}$, and thus that $((\pd_{k} \uu) \cdot \Grad) \BB_{1} \notin L^{2}$.

To complete the counterexample, it only remains to prove Lemma \ref{lem:Sectors}.

\begin{proof}[Proof of Lemma~\ref{lem:Sectors}]
First, set
\begin{align*}
S_{1} &:= \{ \zeta \in \R^{2} : \arg \zeta \in [\tfrac{\pi}{4} - \delta, \tfrac{\pi}{4} + \delta] \},\\
S_{2} &:= \{ \eta \in \R^{2} : \arg \eta \in [\tfrac{3\pi}{4} - \delta, \tfrac{3\pi}{4} + \delta] \},
\end{align*}
and let $S_{3} = \xi - S_{2}$. Given $\xi \in \Xi$, we seek $\zeta$ such that $\zeta \in S_{1}$ and $\xi - \zeta \in S_{2}$: to do so, we find the largest $K(\xi)$ such that
\[
\{ \zeta \in \R^{2} : |\zeta| < K(\xi)  \text{ and } \arg \zeta \in [\tfrac{\pi}{4} - \delta, \tfrac{\pi}{4} + \delta] \} \subset S_{1} \cap S_{3}.
\]
As $\Xi \subset S_{2}$, $S_{3}$ includes zero, and is bounded by the two lines
\[
\gamma_{1}(t) = \xi + t\eta_{1}, \qquad \gamma_{2}(t) = \xi + t\eta_{2},
\]
for $t \geq 0$, where $\eta_{1} = -(\cos (\tfrac{3\pi}{4} + \delta), \sin (\tfrac{3\pi}{4} + \delta))$, $\eta_{2} = -(\cos (\tfrac{3\pi}{4} - \delta), \sin(\tfrac{3\pi}{4} - \delta))$. The line $\gamma_{2}$ has no intersection with $S_{1}$, but the line $\gamma_{1}$ will.

It thus suffices to take $K(\xi)$ to be the minimum distance of $\gamma_{1}$ to the origin: let $\xi = r(\cos (\tfrac{3\pi}{4} + s), \sin (\tfrac{3\pi}{4} + s))$. Then elementary trigonometry shows that
\[
|\gamma_{1}(t)|^{2} = |\xi + t\eta_{1}|^{2} = r^{2} + t^{2} - 2rt \cos (\delta - s).
\]
Differentiating this with respect to $t$, we see that $|\gamma_{1}(t)|^{2}$ is minimised when $t = r \cos (\delta - s)$, whence
\[
|\gamma_{1}(t)|^{2} \geq r^{2}(1 - \cos^{2} (\delta - s)) = r^{2} \sin^{2} (\delta - s).
\]
Since $s \in [-\tfrac{\delta}{2}, \tfrac{\delta}{2}]$, $\delta - s \in [\tfrac{\delta}{2}, \tfrac{3\delta}{2}]$. Hence $|\gamma_{1}(t)| \geq |\xi| \sin \tfrac{\delta}{2}$, meaning that $\Upsilon_{\xi} \subset S_{1} \cap S_{3}$, so choosing $\xi \in \Xi$ and $\zeta \in \Upsilon_{\xi}$ guarantees that $\xi - \zeta \in S_{2}$, as required.
\end{proof}

\addcontentsline{toc}{section}{Bibliography}

\bibliographystyle{agsm}
\bibliography{CommutatorsMHDPaper}

\end{document}